\documentclass[twoside,leqno,10pt, A4]{amsart}
\usepackage{amsfonts}
\usepackage{amsmath}
\usepackage{amscd}
\usepackage{amssymb}
\usepackage{amsthm}
\usepackage{amsrefs}
\usepackage{latexsym}
\usepackage{mathrsfs}
\usepackage{bbm}
\usepackage{enumerate}
\usepackage{graphicx}

\usepackage{amsfonts}
\usepackage{amsmath}
\usepackage{amscd}
\usepackage{amssymb}
\usepackage{amsthm}
\usepackage{amsrefs}
\usepackage{latexsym}
\usepackage{mathrsfs}
\usepackage{bbm}
\usepackage{amscd}
\usepackage{amssymb}
\usepackage{amsthm}
\usepackage{amsrefs}
\usepackage{latexsym}
\usepackage{mathrsfs}
\usepackage{bbm}
\usepackage{enumerate}
\usepackage{graphicx}
\allowdisplaybreaks
\usepackage{color}
\setlength{\textwidth}{18.2cm}
\setlength{\oddsidemargin}{-0.7cm}
\setlength{\evensidemargin}{-0.7cm}
\setlength{\topmargin}{-0.7cm}
\setlength{\headheight}{0cm}
\setlength{\headsep}{0.5cm}
\setlength{\topskip}{0cm}
\setlength{\textheight}{23.9cm}
\setlength{\footskip}{.5cm}

\begin{document}

\newtheorem{theorem}[subsection]{Theorem}
\newtheorem{proposition}[subsection]{Proposition}
\newtheorem{lemma}[subsection]{Lemma}
\newtheorem{corollary}[subsection]{Corollary}
\newtheorem{conjecture}[subsection]{Conjecture}
\newtheorem{prop}[subsection]{Proposition}
\numberwithin{equation}{section}
\newcommand{\mr}{\ensuremath{\mathbb R}}
\newcommand{\mc}{\ensuremath{\mathbb C}}
\newcommand{\dif}{\mathrm{d}}
\newcommand{\intz}{\mathbb{Z}}
\newcommand{\ratq}{\mathbb{Q}}
\newcommand{\natn}{\mathbb{N}}
\newcommand{\comc}{\mathbb{C}}
\newcommand{\rear}{\mathbb{R}}
\newcommand{\prip}{\mathbb{P}}
\newcommand{\uph}{\mathbb{H}}
\newcommand{\fief}{\mathbb{F}}
\newcommand{\majorarc}{\mathfrak{M}}
\newcommand{\minorarc}{\mathfrak{m}}
\newcommand{\sings}{\mathfrak{S}}
\newcommand{\fA}{\ensuremath{\mathfrak A}}
\newcommand{\mn}{\ensuremath{\mathbb N}}
\newcommand{\mq}{\ensuremath{\mathbb Q}}
\newcommand{\half}{\tfrac{1}{2}}
\newcommand{\f}{f\times \chi}
\newcommand{\summ}{\mathop{{\sum}^{\star}}}
\newcommand{\chiq}{\chi \bmod q}
\newcommand{\chidb}{\chi \bmod db}
\newcommand{\chid}{\chi \bmod d}
\newcommand{\sym}{\text{sym}^2}
\newcommand{\hhalf}{\tfrac{1}{2}}
\newcommand{\sumstar}{\sideset{}{^*}\sum}
\newcommand{\sumprime}{\sideset{}{'}\sum}
\newcommand{\sumprimeprime}{\sideset{}{''}\sum}
\newcommand{\sumflat}{\sideset{}{^\flat}\sum}
\newcommand{\shortmod}{\ensuremath{\negthickspace \negthickspace \negthickspace \pmod}}
\newcommand{\V}{V\left(\frac{nm}{q^2}\right)}
\newcommand{\sumi}{\mathop{{\sum}^{\dagger}}}
\newcommand{\mz}{\ensuremath{\mathbb Z}}
\newcommand{\leg}[2]{\left(\frac{#1}{#2}\right)}
\newcommand{\muK}{\mu_{\omega}}
\newcommand{\thalf}{\tfrac12}
\newcommand{\lp}{\left(}
\newcommand{\rp}{\right)}
\newcommand{\Lam}{\Lambda_{[i]}}
\newcommand{\lam}{\lambda}
\def\L{\fracwithdelims}
\def\om{\omega}
\def\pbar{\overline{\psi}}
\def\phis{\phi^*}
\def\lam{\lambda}
\def\lbar{\overline{\lambda}}
\newcommand\Sum{\Cal S}
\def\Lam{\Lambda}
\newcommand{\sumtt}{\underset{(d,2)=1}{{\sum}^*}}
\newcommand{\sumt}{\underset{(d,2)=1}{\sum \nolimits^{*}} \widetilde w\left( \frac dX \right) }

\newcommand{\hf}{\tfrac{1}{2}}
\newcommand{\af}{\mathfrak{a}}
\newcommand{\Wf}{\mathcal{W}}

\newtheorem{mylemma}{Lemma}
\newcommand{\intR}{\int_{-\infty}^{\infty}}

\theoremstyle{plain}
\newtheorem{conj}{Conjecture}
\newtheorem{remark}[subsection]{Remark}

\makeatletter
\def\widebreve{\mathpalette\wide@breve}
\def\wide@breve#1#2{\sbox\z@{$#1#2$}%
     \mathop{\vbox{\m@th\ialign{##\crcr
\kern0.08em\brevefill#1{0.8\wd\z@}\crcr\noalign{\nointerlineskip}%
                    $\hss#1#2\hss$\crcr}}}\limits}
\def\brevefill#1#2{$\m@th\sbox\tw@{$#1($}%
  \hss\resizebox{#2}{\wd\tw@}{\rotatebox[origin=c]{90}{\upshape(}}\hss$}
\makeatletter

\title[Shifted moments of cubic and quartic Dirichlet $L$-functions]{Shifted moments of cubic and quartic Dirichlet $L$-functions}

\author[P. Gao]{Peng Gao}
\address{School of Mathematical Sciences, Beihang University, Beijing 100191, China}
\email{penggao@buaa.edu.cn}

\author[L. Zhao]{Liangyi Zhao}
\address{School of Mathematics and Statistics, University of New South Wales, Sydney NSW 2052, Australia}
\email{l.zhao@unsw.edu.au}

\begin{abstract}
 We establish upper bounds for shifted moments of cubic and quartic Dirichlet $L$-functions under the generalized Riemann hypothesis.  As applications, we prove  bounds for moments of cubic and quartic Dirichlet character sums.
\end{abstract}

\maketitle

\noindent {\bf Mathematics Subject Classification (2010)}: 11L40, 11M06  \newline

\noindent {\bf Keywords}: cubic Dirichlet characters, quartic Dirichlet characters, shifted moments, upper bounds

\section{Introduction}
\label{sec 1}

The study of character sums is ubiquitous in analytic number theory, as their applications abound.  A lot of knowledge has been gained on the moments of character sums. For example, A. J. Harper \cite{Harper23} showed that the low moments of zeta sums (and also character sums) have ``better than square-root cancellation". \newline
  
For high moments, M. Munsch \cite{Munsch17} showed that upper bounds for the shifted moments of the family of Dirichlet $L$-functions to a fixed modulus can be used to prove bounds for moments of character sums. In \cite{Szab}, B. Szab\'o  strengthened the result of Munsch and established, under the generalized Riemann hypothesis (GRH), that for a large fixed modulus $q$, any positive integer $k$, real tuples ${\bf a} =  (a_1, \ldots, a_k), {\bf t} =  (t_1, \ldots, t_k)$ with $a_j \geq 0$ and $|t_j| \leq q^A$ for a fixed positive real number $A$,
\begin{align}
\begin{split}
\label{eqn:shiftedMoments}
 \sum_{\chi\in X_q^*}\big| L\big( \tfrac12+it_1,\chi \big) \big|^{a_1} \cdots \big| L\big( \tfrac12+it_{k},\chi \big) \big|^{a_{k}} \ll_{\bf{t}, \bf{a}} &  \varphi(q)(\log q)^{(a_1^2+\cdots +a_{k}^2)/4} \prod_{1\leq j<l\leq k}  \big|\zeta(1+i(t_j-t_l)+\tfrac 1{\log q}) \big|^{a_ja_l/2}.
\end{split}
\end{align}
Here $X_q^*$ denotes the set of primitive Dirichlet characters modulo $q$, $\varphi$ the Euler totient function and $\zeta(s)$ the Riemann zeta function. 
 The authors \cite{G&Zhao24-11} obtained a partial companion lower bounds result, i.e. under GRH, one may replace $\ll_{\bf{t}}$ by  $\gg_{\bf{t}}$ in \eqref{eqn:shiftedMoments} for $k=2$ and primes $q$. \newline
  
Utilizing \eqref{eqn:shiftedMoments}, Szab\'o proved, under GRH, that for a fixed real number $k>2$, a large integer $q$ and $2 \leq Y \leq q$,
\begin{align*}
  \sum_{\chi\in X_q^*}\bigg|\sum_{n\leq Y} \chi(n)\bigg|^{2k} \ll_k \varphi(q) Y^k\min \left( \log Y, \log \frac {2q}{Y} \right)^{(k-1)^2}. 
\end{align*}
The optimality of the above abound is also known from \cite{Szab24}. \newline
  
The approach of using bounds for shifted moments of $L$-functions to estimate moments of the associated character sums has borne more fruits in its applications.  For instance,  the authors \cite{G&Zhao2024-3} used the method in \cites{Sound2009, Harper} to show that, under GRH, for fixed integer $k\geq 1$, positive real numbers $a_1,\ldots, a_{k}, A$ and a real $k$-tuple $t=(t_1,\ldots ,t_{k})$ with $|t_j|\leq  X^A$ for a large real number $X$, 
\begin{align*}
\begin{split}
 \sumprime_{\substack{(d,2)=1 \\ d \leq X}}  \big| L \big(  & \tfrac12+it_1,\chi^{(8d)} \big) \big|^{a_1} \cdots \big| L\big(\tfrac12+it_{k},\chi^{(8d)}  \big) \big|^{a_{k}} \\
 & \ll X(\log X)^{(a_1^2+\cdots +a_{k}^2)/4}  \prod_{1\leq j<l \leq k} \Big|\zeta \Big( 1+i(t_j-t_l)+\tfrac 1{\log X} \Big) \Big|^{a_ia_j/2} \Big|\zeta \Big(1+i(t_j+t_l)+\tfrac 1{\log X} \Big) \Big|^{a_ia_j/2} \\
& \hspace*{5cm} \times\prod_{1\leq j\leq k} \Big|\zeta \Big(1+2it_j+\tfrac 1{\log X} \Big) \Big|^{a^2_i/4+a_i/2},
\end{split}
\end{align*}
 where $\sum'$ denotes a sum over positive square-free integers and $\chi^{(8d)}=\leg {8d}{\cdot}$ is the Kronecker symbol. The result is then used to establish upper bounds for high moments of quadratic character sums, confirming a conjecture of M. Jutila \cite{Jutila2}. \newline

Moreover, another application of bounds on shifted moments is a result of M. J. Curran \cite[Theorem 1.1]{Curran} on shifted moments of the Riemann zeta function.  This result enabled the first-named author \cite{Gao2026} to obtain upper bounds for high moments of zeta sums. \newline
  
The aim of this paper is to establish upper bounds for moments of cubic and quartic Dirichlet character sums by making further use of shifted moment estimates of the corresponding Dirichlet $L$-functions.  Such bounds for the unshifted moments of the families of cubic and quartic Dirichlet $L$-functions have already been obtained in \cite[Theorem 1.1]{G&Zhao2022}.  Also, it is pointed out there that the (shifted) moments of $L$-functions can be used to determine their symmetric type. Here we recall that the density conjecture of N. Katz and P. Sarnak \cite{K&S}  associates a classical compact group to each reasonable family of $L$-functions and the underlying symmetry of each family can be obtained by computing the corresponding $n$-level density of lower-lying zeros. Using this approach, it is shown in \cite{O&S} that the symmetric type of the family of quadratic Dirichlet $L$-functions is unitary symplectic, while it emerges from \cite{HuRu} that the symmetric type of the family of Dirichlet $L$-functions to a fixed modulus is unitary.  Moreover, a result in \cite{G&Zhao2} yields that the families of cubic and quartic Dirichlet $L$-functions are both unitary families. \newline
 
In view of the above discussion, it is expected that the bounds for shifted moments of cubic or quartic Dirichlet $L$-functions should resemble that for the family of Dirichlet $L$-functions to a fixed modulus in \eqref{eqn:shiftedMoments}.  Our first result here lends further credence to this.
\begin{theorem}
\label{t1}
 With the notation as above and the truth of GRH, let $k\geq 1$ be a fixed integer and $a_1,\ldots, a_{k}$, $A$ fixed positive real numbers.  Suppose that $X$ is a large real number and $t=(t_1,\ldots ,t_{k})$ a real $k$-tuple with $|t_j|\leq  X^A$. Then
\begin{align}
\label{cqLbounds}
\begin{split}
  \sum_{\substack{(q,3)=1 \\ q \leq X}} \ \sumstar_{\substack{\chi \shortmod{q} \\ \chi^3 = \chi_0}} \prod^k_{j=1} \big| L\big( \tfrac12 +it_j,\chi\big) \big|^{a_j}  \ll &  X(\log X)^{(a_1^2+\cdots +a_{k}^2)/4}\prod_{1\leq j<l\leq k}  \big|\zeta(1+i(t_j-t_l)+\tfrac 1{\log X}) \big|^{a_ja_l/2},  
\end{split}
\end{align}
  where the asterisk on the sum over $\chi$ means that the sum runs over primitive characters and $\chi_0$ denotes the principal character.  Also, 
\begin{align}
\label{cqLbounds1}
\begin{split}
 \sum_{\substack{(q,2)=1 \\ q \leq X}} \ \sumstar_{\substack{\chi \shortmod{q} \\ \chi^4 = \chi_0}}\prod^k_{j=1} \big| L\big( \tfrac12+it_j,\chi\big) \big|^{a_j}\ll & X(\log X)^{(a_1^2+\cdots +a_{k}^2)/4}\prod_{1\leq j<l\leq k}  \big|\zeta(1+i(t_j-t_l)+\tfrac 1{\log X}) \big|^{a_ja_l/2},
\end{split}
\end{align}
 where the asterisk on the sum over $\chi$ restricts the sum to primitive characters $\chi$ such that $\chi^2$ remains primitive.  The implied constants in both \eqref{cqLbounds} and \eqref{cqLbounds1} depend on $k$, $A$ and the $a_j$'s, but not on $X$ or the $t_j$'s.
\end{theorem}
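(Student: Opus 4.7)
The plan is to follow the Soundararajan--Harper upper bound strategy under GRH, as adapted to shifted moments by Chandee and subsequently by Szab\'o, and to transplant it from the single-modulus family $X_q^*$ to the cubic (respectively, quartic) family of conductor up to $X$. I treat only the cubic bound \eqref{cqLbounds}; the quartic bound \eqref{cqLbounds1} is entirely analogous, with $\mz[i]$ and quartic residue symbols in place of $\mz[\om]$ and cubic residue symbols.

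The first step is to invoke the GRH-conditional inequality of Soundararajan, in the shifted form used in \cite{Szab}: for every $\chi$ of conductor $\leq X$ and every $2 \leq x \leq X^2$,
\begin{align*}
 \log|L(\hf+it,\chi)| \leq \mathrm{Re} \sum_{n \leq x}\frac{\Lambda(n)\chi(n) n^{-it}}{n^{1/2+1/\log x}\log n}\cdot\frac{\log(x/n)}{\log x} + \frac{\log(X(1+|t|))}{\log x} + O(1).
\end{align*}
Summing with weights $a_j$, exponentiating, and choosing $x$ a suitable power of $X$, I would majorize $\prod_j |L(\hf+it_j,\chi)|^{a_j}$ by $\exp(\mathcal{D}(\chi))$ for a short Dirichlet polynomial $\mathcal{D}(\chi)$ over primes, with the shifts absorbed into the coefficients as $\sum_j a_j p^{-it_j}$. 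Following Harper, I would then split $\mathcal{D}$ into pieces $\mathcal{D}_1 + \cdots + \mathcal{D}_K$ supported on primes in successive ranges $(x_{r-1}, x_r]$ with $x_r = X^{e^{r-K}}$ and $K \asymp \log\log X$, and partition the $\chi$'s according to the first index $r$ at which $|\mathcal{D}_r(\chi)|$ becomes unusually large. On each piece, H\"older's inequality reduces matters to high-moment estimates for the short Dirichlet polynomials $\mathcal{D}_r$ averaged over the cubic family.

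The central analytic ingredient will be a mean value estimate of the shape
\begin{align*}
 \sum_{\substack{(q,3)=1 \\ q \leq X}}\ \sumstar_{\substack{\chi \shortmod q \\ \chi^3 = \chi_0}} \Big| \sum_{n\leq N} b_n \chi(n) \Big|^{2\ell} \ll X\,\ell!\,\Big(\sum_{n} \frac{|b_n|^2}{n}\Big)^\ell,
\end{align*}
valid for $N^{\ell}$ up to a small power of $X$ and for coefficients $b_n$ supported on smooth integers. This I would establish along the lines of \cite{G&Zhao2022}: expanding the $2\ell$-th power, parametrizing the primitive cubic characters modulo $q$ via primes $\pi$ in $\mz[\om]$ through cubic residue symbols, and applying Poisson summation together with cubic reciprocity to the resulting character sum $\sum_{\pi}\left(\frac{n\bar m}{\pi}\right)_3$. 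The diagonal terms (where $n/m$ is a cube in $\mz[\om]$) supply the combinatorial factor $\ell!$ together with the Euler-type main term, whereas cubic reciprocity combined with Poisson summation on the dual side forces the off-diagonal contribution to be $O(X^{1-\delta})$ for $N$ in the stated range. When the coefficients come from the Soundararajan inequality, the diagonal assembles, via Mertens-type estimates for $\sum_{p \leq x} p^{-1-i(t_j - t_l)}$, into precisely the product of $|\zeta(1+i(t_j-t_l)+1/\log X)|^{a_ja_l/2}$ factors on the right-hand side of \eqref{cqLbounds}.

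The principal obstacle, I expect, is verifying the cubic mean value estimate in the regime where $\ell$ is as large as $(\log X)/(\log\log X)^2$, with uniform control of the off-diagonal term in the shifts $t_j$ up to $X^A$. Since the shifts enter only through the coefficients and not through the characters, the uniformity reduces to an $L^\infty$-bound on $|\sum_j a_j p^{-it_j}|$, which is routine; the delicate point is pushing the length $N$ to the threshold dictated by Harper's decomposition while preserving the $\ell!$-shape of the bound, and this is precisely where cubic reciprocity over $\mz[\om]$ plays the decisive role. Once this ingredient is in hand, the Harper decomposition assembles into \eqref{cqLbounds}, and repeating the argument with cubic data replaced by quartic data (primes in $\mz[i]$, the quartic reciprocity law, and Poisson summation in $\mz[i]$) yields \eqref{cqLbounds1}.
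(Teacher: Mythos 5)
Your overall route is the one the paper actually takes: the shifted Soundararajan inequality under GRH, Harper's decomposition into prime ranges with a partition of characters according to the first Dirichlet polynomial that is abnormally large, and mean value estimates for prime-supported Dirichlet polynomials twisted by the cubic (quartic) family, with the diagonal assembling via Mertens-type sums into the $(\log X)^{\sum a_j^2/4}\prod_{j<l}|\zeta(1+i(t_j-t_l)+1/\log X)|^{a_ja_l/2}$ factors. However, your stated central ingredient is too strong and, as phrased, incorrect for these families. Writing $\overline{\chi}=\chi^2$ for a cubic character, the orthogonality-type input (the paper's Lemma \ref{PropDirpoly}) produces a main term whenever $n_1\cdots n_\ell\,(m_1\cdots m_\ell)^2$ is a perfect cube (for the quartic family, whenever $n\,m^3$ is a fourth power), not merely when the two multisets of indices coincide. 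Hence the diagonal is strictly larger than the permutation diagonal, and the clean bound $\ll X\,\ell!\,(\sum_n|b_n|^2/n)^{\ell}$ you posit is not what one can prove: for prime-supported polynomials the correct estimate (the paper's Lemma \ref{lem:2.5}, quoted from \cite{G&Zhao2022}) carries additional terms of the shape $m!\binom{m}{3i}\binom{3i}{i}\binom{2i}{i}\frac{i!}{36^i}\bigl(\sum_p|a(p)|^2/p\bigr)^{m-3i}\bigl(\sum_p|a(p)|^3p^{-3/2}\bigr)^{2i}$ (and the quartic analogue), reflecting configurations in which primes occur with exponents summing to multiples of $3$ (resp.\ $4$). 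The ``$\ell!$-shape'' you hope to preserve cannot be preserved verbatim; instead one must verify that these extra cubic/quartic diagonal contributions are negligible for the specific lengths and exponents occurring in Harper's decomposition (exponents up to about $(\log\log X)^2$, blocks with $\sum 1/p=O(\log\log X)$ or $O(1)$), which is exactly the maximization analysis carried out in the paper (see the display \eqref{Nvest} and the surrounding argument). Without this verification your argument has a genuine gap at its central step.

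Two further points are missing from your sketch, both needed to close the argument. First, the prime-square terms coming from $\Lambda(n)$ at $n=p^2$ cannot be absorbed as $O(1)$ pointwise, since for cubic and quartic characters $\chi(p^2)$ is a nontrivial root of unity; the paper introduces the polynomials $P_m(\chi)=\sum_{2^m<p\le 2^{m+1}}h(p^2)\chi(p^2)/p$ and a further partition into sets $\mathcal{P}(m)$, whose measures are bounded by the same mean value lemma. Second, the exceptional sets (characters for which the very first block is already large, or for which some $P_m$ with $2^m\ge(\log\log X)^3$ is large) are handled by Cauchy--Schwarz against a crude a priori bound $\sumstar_{\chi,q}\prod_j|L(\tfrac12+it_j,\chi)|^{2a_j}\ll X(\log X)^{O(1)}$ (the paper's Lemma \ref{prop: upperbound}), which itself must be proved for these families; your proposal does not supply either ingredient. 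With the corrected mean value lemma, the $\mathcal{P}(m)$ treatment of prime squares, and the crude moment bound in place, the rest of your plan does assemble into \eqref{cqLbounds} and \eqref{cqLbounds1} exactly as in the paper.
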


   We shall prove Theorem \ref{t1} in Section \ref{sec:upper bd} by extending the arguments used in \cite{G&Zhao2022}, which ultimately builds on the method of K. Soundararajan and its refinement by A. J. Harper \cite{Harper} to develop sharp upper bounds for moments of families of $L$-functions under GRH. In order to make Theorem \ref{t1} more wieldy in applications, we need to majorize the Riemann zeta functions appearing in \eqref{cqLbounds}.  To this end, we apply \eqref{mertenstype} in Theorem \ref{t1} to deduce the following.
\begin{corollary}
\label{cor1}
Under the same assumptions, notations and conditions of Theorem~\ref{t1}, we have
\begin{align*}
\begin{split}
  \sum_{\substack{(q,3)=1 \\ q \leq X}} \ \sumstar_{\substack{\chi \shortmod{q} \\ \chi^3 = \chi_0}} \prod^k_{j=1} \big| L\big(\tfrac12+it_j,\chi\big) \big|^{a_j} \ll &  X(\log X)^{(a_1^2+\cdots +a_{k}^2)/4} \prod_{1\leq j<l\leq k} g(|t_j-t_l|)^{a_ja_l/2}, \quad \mbox{and}  \\
  \sum_{\substack{(q,2)=1 \\ q \leq X}} \ \sumstar_{\substack{\chi \shortmod{q} \\ \chi^4 = \chi_0}}\prod^k_{j=1} \big| L\big(\tfrac12+it_j,\chi\big) \big|^{a_j}\ll & X(\log X)^{(a_1^2+\cdots +a_{k}^2)/4} \prod_{1\leq j<l\leq k} g(|t_j-t_l|)^{a_ja_l/2},
\end{split}
\end{align*}
where $g:\mathbb{R}_{\geq 0} \rightarrow \mathbb{R}$ is defined by
\begin{equation*} 
g(x) =\begin{cases}
\log X,  & \text{if } x\leq 1/\log X \text{ or } x \geq e^X, \\
1/x, & \text{if }   1/\log X \leq x\leq 10, \\
\log \log x, & \text{if }  10 \leq x \leq e^{X}.
\end{cases}
\end{equation*}
Here the implied constant depends on $k$, $A$ and the $a_j$'s, but not on $X$ or the $t_j$'s.
\end{corollary}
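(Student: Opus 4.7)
The plan is to derive Corollary~\ref{cor1} as a direct consequence of Theorem~\ref{t1} by replacing each factor $\big|\zeta(1+i(t_j-t_l)+\tfrac{1}{\log X})\big|$ on the right-hand sides of \eqref{cqLbounds} and \eqref{cqLbounds1} by the majorant $g(|t_j-t_l|)$. Writing $u=|t_j-t_l|$ and $\delta=1/\log X$, the whole question thus reduces to establishing the pointwise bound $|\zeta(1+iu+\delta)|\ll g(u)$ uniformly on the range $0\le u\le 2X^A$ forced by the hypothesis $|t_j|\le X^A$. Note that for $X$ large one has $2X^A\le e^X$, so the fourth branch of $g$ (valid for $u\ge e^X$) never actually intervenes in our application and is included in the definition of $g$ only to keep the statement uniform.

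Next I would split the majorization into three cases matching the three nontrivial branches of $g$. For $u\le\delta$, the simple pole of $\zeta$ at $s=1$ and the Laurent expansion $\zeta(s)=1/(s-1)+O(1)$ give $|\zeta(1+iu+\delta)|\le 1/|\delta+iu|+O(1)\le 1/\delta+O(1)\ll\log X$, exactly matching $g(u)$. For $\delta\le u\le 10$, the same expansion yields $|\zeta(1+iu+\delta)|\ll 1/\sqrt{u^2+\delta^2}\ll 1/u$, again the claimed bound. For $10\le u\le e^X$, I would appeal to the Mertens-type estimate \eqref{mertenstype} already invoked in the proof of Theorem~\ref{t1}, which under GRH produces the Littlewood-type bound $|\zeta(1+iu+\delta)|\ll\log\log u$.

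With the three sub-bounds in hand, raising $|\zeta(1+iu+\delta)|\ll g(u)$ to the power $a_ja_l/2$ and multiplying over the pairs $1\le j<l\le k$ converts the right-hand sides of \eqref{cqLbounds} and \eqref{cqLbounds1} into the bounds in Corollary~\ref{cor1}. There is no genuine obstacle in this deduction: each of the three regimes is handled either by a one-line use of the Laurent expansion at $s=1$ or by the classical conditional estimate on the $1$-line that is already encapsulated by \eqref{mertenstype}. The only mildly delicate point, and arguably the main one to verify cleanly, is that the shift $\delta$ does not degrade any of these bounds — but since $\delta>0$ we only ever gain from the shift in the polar regime, while in the non-polar regime $u\ge 10$ the estimate for $|\zeta(1+iu+\delta)|$ is, by convexity, dominated by that for $|\zeta(1+iu)|$ up to an absolute constant.
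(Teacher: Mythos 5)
Your proposal is correct and matches the paper's route: the corollary is deduced from Theorem~\ref{t1} by majorizing each factor $|\zeta(1+i(t_j-t_l)+\tfrac{1}{\log X})|$ by $g(|t_j-t_l|)$, which is exactly the content of \eqref{mertenstype} with $x=X$. Your use of the Laurent expansion at $s=1$ for the ranges $u\le 1/\log X$ and $1/\log X\le u\le 10$ is an equivalent (and equally valid) substitute for the first two unconditional branches of \eqref{mertenstype}, so the argument is essentially the same.
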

  
Now Corollary \ref{cor1} can be readily used to obtain bounds concerning cubic or quartic Dirichlet character sums.  For simplicity, we consider smoothed sums. 
\begin{theorem}
\label{quadraticmean}
With the notation as above and the truth of GRH, let $\Psi(t)$ be a non-negative smooth function compactly supported on $(0, \infty)$. For any integer $k \geq 1$ and any real number $m >2$, we have for large $X$, $Y$, 
\begin{align*}
 S_{3,m}(X,Y):=& \sum_{\substack{(q,3)=1 \\ q \leq X}} \ \sumstar_{\substack{\chi \shortmod{q} \\ \chi^3 = \chi_0}} \Big | \sum_{n}\chi(n)\Psi\big(\frac n{Y}\big)\Big |^{2m} \ll XY^m(\log X)^{(m-1)^2}, \quad \mbox{and} \\
 S_{4,m}(X,Y):=& \sum_{\substack{(q,2)=1 \\ q \leq X}} \ \sumstar_{\substack{\chi \shortmod{q} \\ \chi^4 = \chi_0}} \Big | \sum_{n}\chi(n)\Psi\big(\frac n{Y}\big)\Big |^{2m} \ll  XY^m(\log X)^{(m-1)^2}.
\end{align*}
\end{theorem}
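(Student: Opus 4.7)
The plan is to adapt to the cubic and quartic settings the approach of Munsch \cite{Munsch17} and Szab\'o \cite{Szab}, with Corollary \ref{cor1} substituting for the shifted moment bound \eqref{eqn:shiftedMoments}. The two assertions of Theorem \ref{quadraticmean} are proved identically, so I describe only the cubic case. The first step is to express $A(\chi) := \sum_n \chi(n)\Psi(n/Y)$ as a contour integral via Mellin inversion. Writing $\widetilde\Psi(s) = \int_0^\infty \Psi(x) x^{s-1}\,dx$, which is entire and of rapid decay in vertical strips, and using that $L(s,\chi)$ is entire for primitive non-principal $\chi$, I shift the contour from $\Re s = c > 1$ to $\Re s = 1/2$ to obtain
\begin{align*}
A(\chi) = \frac{Y^{1/2}}{2\pi}\int_{-\infty}^{\infty} \widetilde\Psi(\tfrac12 + it)\,Y^{it}\, L(\tfrac12 + it, \chi)\,dt.
\end{align*}

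Assuming first that $m$ is a positive integer, I would expand $|A(\chi)|^{2m} = A(\chi)^m\,\overline{A(\chi)}^m$ as a $2m$-fold integral, take absolute values, and interchange the $\chi$-sum with the integration. Using $|L(\tfrac12 - it,\chi)| = |L(\tfrac12 + it,\bar\chi)|$ and the fact that conjugation preserves the cubic family, this produces
\begin{align*}
S_{3,m}(X,Y) \ll Y^m \int\!\!\cdots\!\!\int \prod_{j=1}^{2m}|\widetilde\Psi(\tfrac12 + it_j)|\, \Bigg(\sum_{\substack{(q,3)=1 \\ q \leq X}} \sumstar_{\substack{\chi \shortmod{q} \\ \chi^3 = \chi_0}} \prod_{j=1}^{2m}|L(\tfrac12 + i\tau_j,\chi)|\Bigg)\, dt_1\cdots dt_{2m},
\end{align*}
with $\tau_j = \pm t_j$. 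Corollary \ref{cor1} applied with $k = 2m$ and all $a_j = 1$ then bounds the inner sum by $X(\log X)^{m/2}\prod_{j<l} g(|\tau_j - \tau_l|)^{1/2}$. The remaining multiple integral
\begin{align*}
I := \int\!\!\cdots\!\!\int \prod_{j=1}^{2m}|\widetilde\Psi(\tfrac12 + it_j)|\prod_{1\leq j<l\leq 2m} g(|\tau_j - \tau_l|)^{1/2}\,dt_1\cdots dt_{2m}
\end{align*}
I would estimate by partitioning the integration domain according to the clustering structure of the $\tau_j$'s, declaring $j \sim l$ when $|\tau_j - \tau_l| \leq 1/\log X$. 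The rapid decay of $\widetilde\Psi$ localizes the variables, and a case analysis over cluster-size partitions $(s_1, \ldots, s_r)$ of $2m$, balancing the within-cluster contribution $g \asymp \log X$ against the volume loss $(\log X)^{-(2m-r)}$ and the between-cluster integration, shows $I \ll (\log X)^{m^2 - 5m/2 + 1}$, with the dominant contribution coming from the single-cluster partition $r = 1$, $s_1 = 2m$. Multiplying by $X(\log X)^{m/2}$ gives $S_{3,m}(X,Y) \ll XY^m(\log X)^{(m-1)^2}$.

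For non-integer real $m > 2$, one may either rerun the argument using Corollary \ref{cor1} with real exponents $a_j$ summing to $2m$, or interpolate from the integer case via a large-deviation estimate for $|A(\chi)|$ together with partial summation. The principal obstacle is the estimate of $I$: the shifted-moment structure must be tracked carefully through every clustering partition, since a naive bound using Corollary \ref{cor1} at a single shift $t$ would only produce the weaker exponent $m^2$ in place of $(m-1)^2$, and the convexity of the within-cluster weight $s \mapsto s(s-1)/2$ is essential for verifying that the single-cluster configuration dominates.
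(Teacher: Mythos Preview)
Your proposal is correct and follows exactly the route the paper intends: the paper omits the proof and simply refers to Section~7.2 of Szab\'o~\cite{Szab}, replacing the input \eqref{eqn:shiftedMoments} by Corollary~\ref{cor1}, which is precisely the Mellin-inversion, $2m$-fold expansion, and clustering-integral analysis you outline. Your exponent bookkeeping for the single-cluster term and your remark on handling non-integer $m$ via the real exponents in Corollary~\ref{cor1} are both in line with Szab\'o's treatment.
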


Finally, we end this section by noting that it would also be interesting to study analogue of our results for characters of orders higher than four. However, the relation between higher order residue symbols and $n$-th order primitive Dirichlet characters would be more difficult to describe.

\section{Preliminaries}
\label{sec 2}

In this section, we gather several auxiliary results necessary in our proofs.

\subsection{Cubic and quartic Dirichlet characters}
    Let $\omega = \exp (2 \pi i/3)$ and we write $K$ for either the number field $\mq(i)$ or $\mq(\omega)$ with $\zeta_K(s)$ being the corresponding Dedekind zeta function.  Let $N(n)$ denote the norm of $n \in K$ and $r_K$ the residue of $\zeta_K(s)$ at $s = 1$.  Also let $D_K$ be the discriminant of $K$ so that (see \cite[sec 3.8]{iwakow}) $D_{\mq(\omega)}=-3$, $D_{\mq(i)}=-4$.  Throughout the paper, we reserve the letter $p$ for a prime number in $\mz$ and $\varpi$ for a prime in $K$. \newline

Write $\mathcal{O}_K$ for the ring of integers in $K$ and $U_K$ the group of units in $\mathcal{O}_K$.  For $K=\mq(\omega)$, we define the cubic residue symbol $\leg{\cdot}{\varpi}_3$ for any prime $\varpi$, $(\varpi, 3)=1$ by $\leg{a}{\varpi}_3 \equiv a^{(N(\varpi)-1)/3} \pmod{\varpi}$ with $\leg{a}{\varpi}_3 \in \{ 1, \omega, \omega^2 \}$ for any $a \in \mathcal{O}_{K}$, $(a, \varpi)=1$.  If $\varpi | a$, we set $\leg{a}{\varpi}_3 =0$.  For any composite $n$ with $(N(n), 3)=1$, the definition of the cubic symbol is then extended to $\leg{\cdot}{n}_3$ multiplicatively.
Similarly, for $K=\mq(i)$, the quartic residue symbol $\leg{\cdot}{\varpi}_4$ for any prime $\varpi$ prime to $2$ is defined by $\leg{a}{\varpi}_4 \equiv
a^{(N(\varpi)-1)/4} \pmod{\varpi}$ with $\leg{a}{\varpi}_4 \in \{ \pm 1, \pm i \}$ for any $a \in \mathcal{O}_{K}$, $(a, \varpi)=1$.  As before, $\leg{a}{\varpi}_4 =0$ for $\varpi | a$ and we extend the quartic symbol multiplicatively to $\leg{\cdot}{n}_4$ for any composite $n$ with $(N(n), 2)=1$.  Further set $\leg{\cdot }{n}_3=\leg{\cdot }{n}_4=1$ for $n \in U_{K}$. \newline

We quote, from \cite[Lemma 2.4]{G&Zhao2022}, the precise description of primitive cubic and quartic Dirichlet characters.
\begin{lemma}
\label{lemma:cubicclass}
 The primitive cubic Dirichlet characters of conductor $q$ prime to $3$ are of the form $\chi_n:m \rightarrow \leg{m}{n}_3$ for some $n \in \mz[\omega]$, $n \equiv 1 \pmod{3}$, $n$ square-free and not divisible by any rational primes, with norm $N(n) = q$.  The primitive quartic Dirichlet characters of conductor $q$ prime to $2$ such that their squares remain primitive are of the form $\chi_n:m \mapsto \leg{m}{n}_4$ for some $n \in \mz[i]$, $n \equiv 1 \pmod{(1+i)^3}$, $n$ square-free and not divisible by any rational primes, with norm $N(n) = q$.
\end{lemma}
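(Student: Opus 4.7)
\textbf{Proof proposal for Lemma~\ref{lemma:cubicclass}.} The plan is to carry out the two cases in parallel, using in each case (i) the Chinese Remainder Theorem to reduce to the prime‑power case, (ii) the classification of characters of a given order on $(\mz/p^e\mz)^*$, and (iii) the identification of the corresponding characters with cubic or quartic residue symbols from $\mz[\omega]$ or $\mz[i]$.

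First I would handle the cubic case. Let $\chi$ be a primitive Dirichlet character mod $q$ with $\chi^3=\chi_0$ and $(q,3)=1$. By CRT, $\chi=\prod_{p^e\|q}\chi_p$ with each $\chi_p$ of order dividing $3$, and primitivity of $\chi$ is equivalent to primitivity of each $\chi_p$. Since $(\mz/p^e\mz)^*$ is cyclic of order $p^{e-1}(p-1)$, the order‑$3$ character group $\mathrm{Hom}((\mz/p^e\mz)^*,\mu_3)$ is the $3$‑torsion of the dual, which is nontrivial only if $3\mid p-1$ (hence $p\ne 3$), and in that case every such character factors through the natural surjection $(\mz/p^e\mz)^*\twoheadrightarrow(\mz/p\mz)^*$ because $\gcd(3,p^{e-1})=1$; thus primitivity forces $e=1$. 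So $q$ is squarefree and every $p\mid q$ satisfies $p\equiv 1\pmod 3$. For such $p$, $p$ splits in $\mz[\omega]$ as $\varpi\bar\varpi$ with $N(\varpi)=p$, and the reduction $\mz\to\mathcal{O}_K/\varpi\cong\mathbb{F}_p$ shows that $m\mapsto\leg{m}{\varpi}_3\equiv m^{(p-1)/3}\pmod p$ defines a nontrivial order‑$3$ character of $(\mz/p\mz)^*$; the analogous statement holds for $\bar\varpi$. Since the $3$‑torsion of the dual has order $3$, these two symbols together with the trivial character exhaust the characters of order dividing $3$ mod $p$, so $\chi_p=\leg{\cdot}{\tilde\varpi_p}_3$ for a choice $\tilde\varpi_p\in\{\varpi,\bar\varpi\}$.

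Setting $n=\prod_{p\mid q}\tilde\varpi_p$, I get $\chi=\leg{\cdot}{n}_3$ by multiplicativity, with $N(n)=q$; the primes $\tilde\varpi_p$ are pairwise non‑associate (for different $p$) and split (not inert), so $n$ is squarefree and has no rational prime divisor. Finally, the six units of $\mz[\omega]$ act freely on residues coprime to $3$, and each orbit contains a unique primary representative with $n\equiv 1\pmod 3$; since $\leg{m}{u}_3=1$ for any unit $u$, we may replace $n$ by its unique primary associate without changing $\chi$.

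The quartic case follows the same template in $\mz[i]$: CRT forces $q$ squarefree with each $p\mid q$ satisfying $p\equiv 1\pmod 4$, so $p=\varpi\bar\varpi$ with $N(\varpi)=p$, and the symbol $\leg{m}{\varpi}_4\equiv m^{(p-1)/4}\pmod p$ gives an order‑$4$ character of $(\mz/p\mz)^*$. The additional hypothesis that $\chi^2$ remains primitive translates, under CRT, into the requirement that each local factor $\chi_p^2$ be nontrivial, i.e.\ each $\chi_p$ has order exactly $4$ (ruling out the trivial and the Legendre factors). The two order‑$4$ characters mod $p$ are exactly $\leg{\cdot}{\varpi}_4$ and $\leg{\cdot}{\bar\varpi}_4$, so $\chi=\leg{\cdot}{n}_4$ with $n=\prod\tilde\varpi_p$, square‑free, with no rational prime divisors, and $N(n)=q$. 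For normalization I use the eight units of $\mz[i]/(1+i)^3$, among which there is a unique representative in each class coprime to $(1+i)$; this makes $n\equiv 1\pmod{(1+i)^3}$ well‑defined and, as before, $\leg{\cdot}{u}_4=1$ means this normalization leaves $\chi$ unchanged. Conversely, in both cases the symbols $\leg{\cdot}{n}_3$, $\leg{\cdot}{n}_4$ attached to such $n$ manifestly have the advertised properties, so the correspondence is bijective.

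The main technical point is item (ii): showing that primitivity forces $q$ squarefree, which rests on the coprimality of the character order with $p^{e-1}$ for $p\nmid 3$ (resp.\ $p\nmid 2$). Everything else is either a unit‑normalization bookkeeping or a direct computation with the residue symbol and Fermat's little theorem in $\mathcal{O}_K$.
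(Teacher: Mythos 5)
The paper does not prove this lemma; it is quoted verbatim from \cite{G&Zhao2022}, Lemma 2.4, so there is no ``paper's own proof'' to compare against. On its own merits, your proof is essentially correct and captures all the standard ingredients: CRT-local decomposition of $\chi$, the fact that for $p \nmid 3$ (resp. $p \nmid 2$) any character of $(\mz/p^e\mz)^*$ of order dividing $3$ (resp. $4$) factors through $(\mz/p\mz)^*$ because $\gcd(3, p^{e-1})=1$ (resp. $\gcd(4, p^{e-1})=1$) and hence primitivity forces $e=1$; the identification of the local characters with $\leg{\cdot}{\varpi}_3$ and $\leg{\cdot}{\bar\varpi}_3$ (resp. their quartic analogues) via $a \mapsto a^{(N(\varpi)-1)/3} \pmod{\varpi}$; the translation of ``$\chi^2$ remains primitive'' into ``$\chi_p$ has exact order $4$'' for every local factor; and the primary normalization via the action of units.

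Two small slips, neither fatal. First, in the quartic normalization you write ``the eight units of $\mz[i]/(1+i)^3$''; in fact $\mz[i]$ has only the four units $\{\pm1, \pm i\}$, and the point is that these four map bijectively onto $(\mz[i]/(1+i)^3)^*$, which has order $\varphi((1+i)^3) = 8 \cdot \tfrac12 = 4$ (eight is the ring's cardinality, not the size of its unit group). Consequently every $n$ coprime to $1+i$ has a unique primary associate with $n \equiv 1 \pmod{(1+i)^3}$, which is what the argument actually uses. Second, when you invoke cyclicity of $(\mz/p^e\mz)^*$ in the cubic case you should note $p=2$ separately (where the group is not cyclic for $e\ge 3$); the conclusion is unaffected because the $3$-torsion of $(\mz/2^e\mz)^*$ is trivial regardless, but as written the appeal to cyclicity is not quite available for $p=2$. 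With these repaired, the argument is a complete and correct proof of the lemma.
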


\subsection{Various sums}
\label{sec2.4}

From \cite[Lemma 2.2]{G&Zhao2024-3}, we have the following results on sums over rational primes.
\begin{lemma}
\label{RS} Let $x \geq 2$ and $\alpha \geq 0$. We have, for some constant $b$,
\begin{align}
\label{merten}
\sum_{p\le x} \frac{1}{p} =& \log \log x + b+ O\Big(\frac{1}{\log x}\Big), \\
\label{mertenpartialsummation}
\sum_{p\le x} \frac {\log p}{p} =& \log x + O(1),  \quad \mbox{and} \\
\label{mertenstype}
  \sum_{p\leq x} \frac{\cos(\alpha \log p) }{p}=& \log |\zeta(1+1/\log x+i\alpha)| +O(1)
  \leq
\begin{cases}
\log\log x+O(1)            & \text{if }  \alpha\leq 1/\log x \text{ or } \alpha\geq e^x  ,   \\
\log(1/\alpha)+O(1)        & \text{if }  1/\log x\leq \alpha \leq 10,   \\
\log\log\log \alpha + O(1) & \text{if }   10 \leq \alpha \leq e^x.  
\end{cases}
\end{align}
The estimates in the first two cases of \eqref{mertenstype} are unconditional and the third holds under the Riemann hypothesis.
\end{lemma}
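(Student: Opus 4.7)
The plan is to establish the three estimates sequentially, reducing each to classical tools. For \eqref{mertenpartialsummation} I would start from the identity $\log n = \sum_{d\mid n}\Lambda(d)$ and the hyperbola manipulation against Stirling's formula $\sum_{n\le x}\log n = x\log x - x + O(\log x)$ to derive $\sum_{d\le x}\Lambda(d)/d = \log x + O(1)$; removing prime powers of height $\ge 2$, whose total contribution is $O(1)$, gives \eqref{mertenpartialsummation}. Then Abel summation of the weight $1/\log t$ over the measure $d(\sum_{p\le t}\log p/p)$ on $[2,x]$, combined with the tail estimate $\int_x^\infty dt/(t(\log t)^2) \ll 1/\log x$, yields \eqref{merten} with error $O(1/\log x)$ and simultaneously identifies the Meissel--Mertens constant $b$.

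To establish the identity in \eqref{mertenstype}, I would begin from the Euler-product expansion $\log\zeta(s) = \sum_p p^{-s} + O(1)$ valid for $\text{Re}(s) > 1$, the $O(1)$ absorbing the absolutely convergent sum over prime powers $k\ge 2$. Substituting $s = 1 + 1/\log x + i\alpha$ and taking real parts gives $\log|\zeta(1+1/\log x + i\alpha)| = \sum_p p^{-1-1/\log x}\cos(\alpha\log p) + O(1)$. I would then split the prime sum at $x$. The tail $\sum_{p>x} p^{-1-1/\log x}$ is $O(1)$ via partial summation against the prime number theorem after the change of variables $v = (\log t)/\log x$ in the resulting integral. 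For the head, the inequality $|1 - p^{-1/\log x}| \le (\log p)/\log x$ for $p\le x$, combined with \eqref{mertenpartialsummation}, gives $\sum_{p\le x}|p^{-1-1/\log x} - p^{-1}| = O(1)$, so that the head matches $\sum_{p\le x}\cos(\alpha\log p)/p$ up to an $O(1)$ error. This yields the equality in \eqref{mertenstype}.

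The case analysis for the upper bound then splits naturally. When $\alpha \le 1/\log x$ or $\alpha \ge e^x$ I would use the trivial bound $|\cos(\alpha\log p)|\le 1$ together with \eqref{merten} to obtain $\log\log x + O(1)$, unconditionally. For $1/\log x \le \alpha \le 10$ the Laurent expansion $\zeta(s) = (s-1)^{-1} + O(1)$ near $s=1$ gives $|\zeta(1 + 1/\log x + i\alpha)| \asymp 1/\sqrt{(1/\log x)^2 + \alpha^2} \asymp 1/\alpha$, so $\log|\zeta| = \log(1/\alpha) + O(1)$, again unconditionally. The only conditional step is the range $10 \le \alpha \le e^x$: here I would invoke the classical Littlewood bound $|\zeta(\sigma + it)| \ll \log\log|t|$ valid uniformly for $\sigma \ge 1$, $|t|\ge 3$ \emph{under} RH, which yields $\log|\zeta(1+1/\log x + i\alpha)| \le \log\log\log\alpha + O(1)$. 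This third case is the main technical hurdle: the unconditional bound $|\zeta(1+it)|\ll \log|t|$ loses a logarithm and is insufficient for the stated shape. I would cite Titchmarsh, \emph{The Theory of the Riemann Zeta-function}, Theorem 14.9, for the conditional estimate rather than reproduce its proof.
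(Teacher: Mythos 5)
The paper does not prove this lemma: it is quoted verbatim from \cite[Lemma 2.2]{G\&Zhao2024-3} (itself following \cite[Proposition 2.3]{Munsch17}), so there is no in-text proof to compare against. That said, your reconstruction is the correct and standard one, and each step holds up. The derivation of \eqref{mertenpartialsummation} from $\log n=\sum_{d\mid n}\Lambda(d)$ together with Stirling, followed by Abel summation to reach \eqref{merten} (the convergence of $\int_2^\infty E(t)\,dt/(t(\log t)^2)$ with $E(t)=\sum_{p\le t}\log p/p-\log t=O(1)$ identifies the constant $b$ and delivers the $O(1/\log x)$ error without PNT, as in Mertens' original argument), is exactly how one proves these. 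The equality in \eqref{mertenstype} is correctly obtained from $\log\zeta(s)=\sum_p p^{-s}+O(1)$ on $\operatorname{Re}(s)>1$, and your two reductions --- bounding the tail $\sum_{p>x}p^{-1-1/\log x}$ by the substitution $v=\log t/\log x$ giving $\int_1^\infty e^{-v}\,dv/v=O(1)$, and bounding the head discrepancy by $1-p^{-1/\log x}\le \log p/\log x$ together with \eqref{mertenpartialsummation} --- are both sound (Chebyshev's estimate suffices; full PNT is not needed for the tail). The three-way case analysis is right: the first case is the trivial $|\cos|\le 1$ bound with \eqref{merten}; the second follows from the Laurent expansion $\zeta(s)=(s-1)^{-1}+O(1)$ and the fact that $|s-1|\asymp\alpha$ when $\alpha\ge 1/\log x$; and the third genuinely needs the conditional Littlewood bound $|\zeta(\sigma+it)|\ll\log\log|t|$ for $\sigma\ge 1$ under RH, matching the paper's unconditional/conditional bookkeeping. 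One small caveat: you should verify the exact theorem number in Titchmarsh (the Littlewood estimate under RH appears in Chapter XIV, and the precise labeling varies by edition), and note that the uniformity in $\sigma\ge 1$ is immediate since the bound is monotone in $\sigma$ for $\sigma\ge 1$ in the relevant range.
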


  We define $\delta_{n=\text{cubic}}$ to be $1$ or $0$ depending on whether $n$ is a cube or not, and $\delta_{n=\text{quartic}}$ similarly for fourth powers. 
For any integer $c \in \mz$, set
\begin{align}
\label{gc}
  g(c)=& \prod_{\varpi | c} (1+ N(\varpi)^{-1})^{-1} \prod_{p | c} \Big(1-\frac 1{p^2}\prod_{\varpi | p} (1- N(\varpi)^{-2})^{-1}  \Big )^{-1}.
\end{align}
We follow the convention that an empty product is defined to be $1$.  The same notation $g(c)$ will be used for both $K=\mq(\omega)$ and $\mq(i)$ with the meaning of $\varpi$ varying accordingly.  There should be no ambiguity, as the context makes the meaning clear.  We further define
\begin{equation}
\label{zk}
 c_{K}= r_K \zeta^{-1}_{K}(2) \prod_{(p, D_K)=1}\Big (1-\frac 1{p^2} \prod_{\varpi | p} (1- N(\varpi)^{-2})^{-1} \Big ).
\end{equation}

Let $\Phi$ for a smooth, non-negative function compactly supported on $(1, \infty)$. Recall that the Mellin transform ${\widehat \Phi}(s)$ for any complex number $s$ of $\Phi$ is defined as
\begin{equation*}
{\widehat \Phi}(s) = \int\limits_{0}^{\infty} \Phi(x)x^{s}\frac {\dif x}{x}.
\end{equation*}

The following result on smoothed sums of cubic and quartic characters is quoted from \cite[Lemma 2.5]{G&Zhao2022}.
\begin{lemma} \label{PropDirpoly}
With the notation as above, for large $X$ and any positive integer $c$, we have
\begin{align*}
\begin{split}
\sum_{(q,3)=1} \ \sumstar_{\substack{\chi \shortmod{q} \\ \chi^3 = \chi_0}} \chi(c) \Phi\Big(\frac{q}{X}\Big)=&
\displaystyle \delta_{c=\text{cubic}}c_{\mq(\omega)} {\widehat \Phi}(1) X g(3c) + O( X^{1/2+\varepsilon}c^{1/2+\varepsilon} ), \quad \mbox{and} \\
\sum_{(q,2)=1} \ \sumstar_{\substack{\chi \shortmod{q} \\ \chi^4 = \chi_0}} \chi(c) \Phi\Big(\frac{q}{X}\Big)=&
\displaystyle \delta_{c=\text{quartic}}c_{\mq(i)} {\widehat \Phi}(1) X g(2c) + O( X^{1/2+\varepsilon}c^{1/2+\varepsilon} ).
\end{split}
\end{align*}
Here $c_{\mq(\omega)}$ and $c_{\mq(i)}$ are defined in \eqref{zk} and $g$ in \eqref{gc}.
\end{lemma}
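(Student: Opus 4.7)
The plan is to begin by invoking Lemma \ref{lemma:cubicclass}, which puts the primitive cubic characters appearing in the sum in bijection with square-free $n \in \mz[\omega]$ satisfying $n \equiv 1 \pmod 3$, $n$ not divisible by any rational prime, and with $N(n) = q$; under this bijection $\chi(c) = \leg{c}{n}_3$. The target sum therefore rewrites as $\sum^{\flat}_{n} \leg{c}{n}_3 \Phi(N(n)/X)$, where $\flat$ abbreviates the listed arithmetic restrictions on $n$. The quartic case is handled identically through the analogous parametrization over $\mz[i]$, so I would concentrate the work on the cubic assertion and obtain the quartic one by replacing $3$ with $(1+i)^3$ and the cubic symbol by the quartic one throughout.

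Next I would detach the squarefree and no-rational-prime-divisor conditions by M\"obius inversion in $\mz[\omega]$, and then apply Mellin inversion $\Phi(N(n)/X) = \frac{1}{2\pi i}\int_{(c_0)} \widehat{\Phi}(s) X^s N(n)^{-s}\, \dif s$ with $c_0 > 1$. This recasts the sum as $\frac{1}{2\pi i}\int_{(c_0)} \widehat{\Phi}(s) X^s D_c(s)\, \dif s$, where $D_c(s)$ admits an Euler product over primes of $\mz[\omega]$. The crucial step is to invoke cubic reciprocity to swap $\leg{c}{n}_3$ for $\leg{n}{c}_3$, at the cost of supplementary factors controlled by the residue conditions modulo $3$; this exhibits $D_c(s)$, up to an absolutely convergent Euler correction supported at primes dividing $3c$, as a Hecke $L$-function $L(s, \psi_c)$ attached to a cubic ray class character $\psi_c$ built from $\leg{\cdot}{c}_3$.

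Now I would shift the contour from $\Re(s) = c_0$ down to $\Re(s) = 1/2 + \varepsilon$. A simple pole at $s = 1$ is picked up precisely when $\psi_c$ is trivial, which by cubic reciprocity is equivalent to $c$ being a perfect cube. In that case $L(s, \psi_c)$ contains a factor of $\zeta_{\mq(\omega)}(s)$ whose residue at $s=1$ is $r_{\mq(\omega)}$; assembling this residue with the $\zeta_{\mq(\omega)}(2)^{-1}$ coming from the squarefree M\"obius sieve, the $\prod_{(p, D_K) = 1}(\cdots)$ factor produced by excluding inert rational primes, and the bad local factors at primes dividing $3c$, one recovers exactly the arithmetic constant $c_{\mq(\omega)}\, g(3c)$ of \eqref{zk} and \eqref{gc}; multiplied by $\widehat{\Phi}(1)\, X$, this yields the claimed main term $\delta_{c = \mathrm{cubic}}\, c_{\mq(\omega)}\, \widehat{\Phi}(1)\, X\, g(3c)$. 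When $c$ is not a cube, $\psi_c$ is nontrivial and the shifted contour encloses no singularity.

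The remaining integral on $\Re(s) = 1/2 + \varepsilon$ is controlled by a convexity bound for $L(s, \psi_c)$, derived unconditionally from Phragmen--Lindel\"of applied to the Hecke functional equation with analytic conductor polynomial in $c$, together with the rapid decay of $\widehat{\Phi}$ on vertical lines; this delivers the announced error $O(X^{1/2+\varepsilon} c^{1/2+\varepsilon})$. The main obstacle is the bookkeeping in the preceding paragraph: one must meticulously match the local Euler correction factors at primes dividing $3c$ (and at the ramified prime $3$ in $\mz[\omega]$, respectively at $(1+i)$ in $\mz[i]$) with the intricate definitions of $c_K$ and $g(3c)$, a local computation that must be carried out prime by prime and whose output is precisely what forces the explicit form of the main constant.
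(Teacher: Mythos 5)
Your proposal follows what is essentially the standard route for evaluating smoothed sums of cubic and quartic residue symbols (as in Baier--Young and the cited Gao--Zhao paper), and the overall architecture is sound: parametrize via Lemma~\ref{lemma:cubicclass}, pass to a Dirichlet series via Mellin inversion, identify the series with a Hecke $L$-function, shift the contour, and bound the shifted integral by convexity. The shape of the error term $O(X^{1/2+\varepsilon}c^{1/2+\varepsilon})$ is exactly what the Phragm\'en--Lindel\"of convexity bound with conductor $\asymp c^2$ delivers, and the pole detection ($s=1$ contributes iff the twisting character is trivial, i.e.\ iff $c$ is a perfect cube, resp.\ fourth power) is the correct mechanism.

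A few points where the sketch is slightly imprecise or glosses over real work. First, the role of cubic (resp.\ quartic) reciprocity here is not to generate ``supplementary factors'' to be tracked; rather, since $n\equiv 1\pmod 3$ is already primary and one decomposes $c$ into primary prime factors, Eisenstein's reciprocity law gives $\leg{c}{n}_3=\leg{n}{c}_3$ cleanly --- what reciprocity actually buys you is the fact that $n\mapsto\leg{c}{n}_3$ depends only on a ray class of $n$, so that the resulting Dirichlet series genuinely is a Hecke $L$-function whose analytic continuation and functional equation you may then invoke. Second, the ``M\"obius inversion'' step is doing two jobs at once (removing squares and removing rational-prime divisors) and needs to be interleaved correctly with the Mellin inversion and contour shift; it is cleaner, and closer to the source, to write the full Dirichlet series $\sum_n \leg{c}{n}_3 N(n)^{-s}$ (with $n$ primary, squarefree, free of rational primes) as an Euler product and compare it directly to $\zeta_K(s)$ times an absolutely convergent tail on $\Re s>1/2$, reading off the residue. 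Third, you are candid that the matching of the local Euler factors at primes dividing $3c$ (resp.\ $2c$) against the definitions of $c_K$ in \eqref{zk} and $g$ in \eqref{gc} is the crux; that computation is not optional, since the structure of $g(3c)$ (the inner product over $\varpi\mid c$ and the outer one over $p\mid c$) precisely encodes the interaction between the squarefree sieve, the rational-prime exclusion, and the coprimality with $c$. With these caveats filled in, the argument goes through, and it agrees in method with the proof of \cite[Lemma 2.5]{G&Zhao2022}, which this paper quotes rather than reproves.
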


For brevity and notational ease, we write 
\begin{align*} 
\begin{split}
 \sumstar\limits_{\chi, q} =& \sum_{\substack{(q,3)=1 }}\;  \sumstar_{\substack{\chi \shortmod{q} \\ \chi^3 = \chi_0}} \quad \mbox{or} \quad
 \sum_{\substack{(q,2)=1 }}\;  \sumstar_{\substack{\chi \shortmod{q} \\ \chi^4 = \chi_0}}.
\end{split}
\end{align*}
  
  Our next result is taken from \cite[Lemma 2.12]{G&Zhao2022} and estimates moments of Dirichlet polynomials with cubic and quartic characters. 
\begin{lemma}
\label{lem:2.5}
With the notation as above, let $X$ and $y$ be real numbers and $m$ a positive integer.  For fixed $\varepsilon$ with $0<\varepsilon<1$ and any complex numbers $a(p)$, $j \in \{3, 4\}$, we have
 \begin{align*}
  \sumstar_{\substack{\chi, q \\ X/2< q \leq X}}  \left|\sum_{\substack{p \leq y}}\frac{a(p)\chi(p)}{p^{1/2}}\right|^{2m} 
  \ll_{\varepsilon}  X\sum^{\lceil m/j\rceil }_{i=0}m! \binom {m}{j i} \binom {j i}{i} & \binom {(j-1) i}{i} a_j
  \Big (\sum_{p \leq y} \frac {|a(p)|^2}{p}\Big )^{m-j i}\Big (\sum_{p \leq y} \frac {|a(p)|^j}{p^{\frac j2}}\Big )^{2i} \\
  & +X^{1/2+\varepsilon}y^{2m+2m\varepsilon} \Big( \sum_{p \leq y} \frac {|a(p)|^2}{p} \Big)^{m},
 \end{align*}
   where $\lceil x \rceil = \min \{ n \in \intz : n \geq x\}$,
   \[ a_3= \binom {2i}{i}\frac {i!}{36^i} \quad \mbox{and}  \quad a_4= \binom {3i}{i}\frac {(2i)!}{576^i}. \]
\end{lemma}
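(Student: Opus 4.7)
The plan is to expand the $2m$-th power as a multiple sum over prime tuples, interchange orders of summation, and apply Lemma \ref{PropDirpoly} to the resulting character sum over $(q,\chi)$. Setting $n_1 = p_1\cdots p_m$ and $n_2 = q_1\cdots q_m$, I write
\[
 \Big|\sum_{p\le y}\frac{a(p)\chi(p)}{p^{1/2}}\Big|^{2m} = \sum_{p_1,\dots,p_m}\sum_{q_1,\dots,q_m} \frac{a(p_1)\cdots a(p_m)\overline{a(q_1)\cdots a(q_m)}}{(n_1 n_2)^{1/2}}\chi(n_1)\overline\chi(n_2).
\]
Since any character of order $j$ satisfies $\overline\chi = \chi^{j-1}$, the inner character expression equals $\chi(n_1 n_2^{j-1})$. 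Inserting a smooth majorant $\Phi$ of $\mathbf 1_{(1/2,1]}$ for $q/X$ and applying Lemma \ref{PropDirpoly} with $c = n_1 n_2^{j-1}$, the resulting sum over $(q,\chi)$ contributes a main term proportional to $X g(jc)$ precisely when $c$ is a perfect $j$-th power, together with an error of size $X^{1/2+\varepsilon}(n_1 n_2)^{1/2+\varepsilon}$ after trivially bounding $c^{1/2+\varepsilon}$ (or, more sharply, by factoring the treatment through $\chi(n_1)\overline\chi(n_2)$ directly).

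The error piece is the easier half. Using $g(\cdot)\le 1$, the total error is majorised by
\[
 \ll X^{1/2+\varepsilon}\sum_{p_1,\dots,q_m \le y} |a(p_1)|\cdots |a(q_m)| (n_1 n_2)^{\varepsilon} \le X^{1/2+\varepsilon} y^{2m\varepsilon}\Big(\sum_{p \le y} |a(p)|\Big)^{2m},
\]
and a direct Cauchy--Schwarz step, $\sum_{p\le y}|a(p)| = \sum_{p\le y}\tfrac{|a(p)|}{p^{1/2}}\cdot p^{1/2} \le y\bigl(\sum_{p\le y}|a(p)|^2/p\bigr)^{1/2}$, delivers the stated $X^{1/2+\varepsilon}y^{2m+2m\varepsilon}\bigl(\sum_{p\le y}|a(p)|^2/p\bigr)^{m}$ bound.

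The main difficulty lies in the combinatorics of the main term. I parametrise each admissible tuple by the prime multiplicities $(\alpha_p,\beta_p)$ (counting how often a prime $p$ appears on each side); the condition that $n_1 n_2^{j-1}$ be a $j$-th power is equivalent to $\alpha_p \equiv \beta_p \pmod j$ for every prime $p$. Organising the configurations by an index $i$ counting the number of primes appearing as an isolated $j$-cluster on one side (with a matching cluster needed on the other side) beyond the diagonal regime $\alpha_p = \beta_p = 1$, each diagonal pair contributes a factor $|a(p)|^2/p$ while each $j$-cluster contributes $|a(p)|^j/p^{j/2}$, producing respectively the $m-ji$ and $2i$ powers of the two sums appearing in the bound. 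The combinatorial weight $m!\binom{m}{ji}\binom{ji}{i}\binom{(j-1)i}{i}a_j$ arises from the $m!$ orderings of positions, $\binom{m}{ji}$ choices of which positions host clusters, and the multinomial count $\binom{ji}{i}\binom{(j-1)i}{i}a_j$ for partitioning those positions into $i$ unordered $j$-clusters and matching them with their counterparts on the opposite side; here $a_j$ encodes the $1/(j!)^{2i}$ factor arising from the internal ordering of each cluster (visible as $36^i=(3!)^{2i}$ in $a_3$ and $576^i=(4!)^{2i}$ in $a_4$). Higher multiplicity configurations (e.g.\ clusters of size $2j$ or mixed clusters with $\alpha_p,\beta_p \ge j$) are majorised by the $i$-th summand through routine binomial estimates, and the sum truncates at $i=\lceil m/j\rceil$ since $ji \le m$ is forced. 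The main challenge is ensuring the combinatorial bookkeeping cleanly absorbs the off-diagonal contributions into the stated sum.
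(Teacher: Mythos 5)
The paper does not actually prove this lemma here; it is quoted verbatim from \cite[Lemma~2.12]{G&Zhao2022}, so I am comparing your proposal against the strategy that source must use. Your overall route — expand the $2m$-th power, reduce the inner character sum to the orthogonality estimate of Lemma~\ref{PropDirpoly}, pick out the $j$-th-power condition for the main term, and run a cluster-counting argument for the combinatorics — is indeed the right one and matches the cited proof in spirit.

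However, there is a genuine gap in your error-term treatment. If you literally apply Lemma~\ref{PropDirpoly} to $c = n_1 n_2^{j-1}$, the resulting error is
\[
X^{1/2+\varepsilon}\sum \frac{|a(p_1)|\cdots|a(q_m)|}{(n_1 n_2)^{1/2}}\,(n_1 n_2^{j-1})^{1/2+\varepsilon}
\ll X^{1/2+\varepsilon}\,n_1^{\varepsilon}\,n_2^{(j-2)/2+(j-1)\varepsilon}\,\bigl(\textstyle\sum_{p\le y}|a(p)|\bigr)^{2m},
\]
which after $n_1,n_2\le y^m$ and Cauchy--Schwarz yields $X^{1/2+\varepsilon}y^{2m+m/2+O(m\varepsilon)}(\sum|a(p)|^2/p)^m$ for $j=3$, and $X^{1/2+\varepsilon}y^{3m+O(m\varepsilon)}(\ldots)^m$ for $j=4$. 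The extra $y^{m/2}$ (resp.\ $y^m$) is a structural loss, not absorbable by shrinking $\varepsilon$, so the "trivial bounding of $c^{1/2+\varepsilon}$" branch of your argument does not give the claimed $X^{1/2+\varepsilon}y^{2m+2m\varepsilon}(\ldots)^m$. Your parenthetical "more sharply, by factoring through $\chi(n_1)\overline\chi(n_2)$ directly" is not an optional refinement — it is the step that is actually needed. Concretely one must prove (or quote from the source) a version of the orthogonality estimate of the form
\[
\sumstar_{\chi,q}\chi(n_1)\overline{\chi}(n_2)\,\Phi\Bigl(\frac{q}{X}\Bigr) = \text{main term} + O\bigl(X^{1/2+\varepsilon}(n_1 n_2)^{1/2+\varepsilon}\bigr),
\]
which is symmetric in $n_1,n_2$. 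This follows by the same Gauss-sum/Poisson/large-sieve machinery behind Lemma~\ref{PropDirpoly} but is not a corollary of that lemma as stated, because once you pass to $c=n_1n_2^{j-1}$ the dependence on $n_2$ has already been inflated. Spell this out explicitly.

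One further caution on the combinatorics: your phrase "with a matching cluster needed on the other side" slightly misstates the constraint. A prime with $(\alpha_p,\beta_p)=(j,0)$ already satisfies $\alpha_p\equiv\beta_p \pmod j$ on its own; what forces an equal number $i$ of clusters on each side is the global balance $\sum_p\alpha_p=\sum_p\beta_p=m$, not a per-prime matching. This matters when you claim higher-multiplicity configurations (say $\alpha_p-\beta_p=2j$, or $\alpha_p,\beta_p\ge j$ simultaneously) are "majorised by the $i$-th summand through routine binomial estimates"; that absorption does work but needs the correct bookkeeping of positions on \emph{both} sides, and it is where the explicit $a_j=\binom{2i}{i}\frac{i!}{36^i}$ (resp.\ $\binom{3i}{i}\frac{(2i)!}{576^i}$) weights come from. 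Since you flag this as "the main challenge," be aware that it is a nontrivial verification rather than a routine estimate.
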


\subsection{Upper bounds for cubic and quartic Dirichlet $L$-functions}
\label{sec2.5}

  Let $\Lambda(n)$ denote the von Mangoldt function. We note the following result for $\log |L(\sigma+it,\chi)|$ from \cite[Proposition 2.3]{Munsch17} (see also \cite[Lemma 2.6]{G&Zhao2024-3}).
\begin{lemma}
\label{up}
 Assume the truth of GRH. Let $\sigma \geq 1/2$ and $t \in \rear$ and define $\log^+ t=\max\{ 0,\log t\}$. For any primitive Dirichlet character $\chi$ modulo $q$ and any $x \geq 2$, we have
\begin{equation} \label{variant}
    \log |L(\sigma+it, \chi)| \leq \Re \sum_{n\leq x}\frac{\chi(n)\Lambda(n)}{n^{1/2+\max(\sigma-1/2, 1/\log x) +it} \log n}\frac{\log x/n}{\log x}+\frac{\log q+\log^+ t}{\log x}+O\Big( \frac{1}{\log x} \Big).
\end{equation}
\end{lemma}

The above lemma, together with arguments in the proof of \cite[Proposition 2]{Szab} leads to the following bound for a weighted average of $\log |L(\sigma+it,\chi)|$ over various $t$'s.
\begin{lemma}
\label{lemmasum}
 Assume the truth of GRH. Let $k$ be a positive integer and $Q$, $a_1,a_2,\ldots, a_{k}$ be fixed positive real constants, $x\geq 2$.  Set $a:=a_1+\cdots+ a_{k}$.  Suppose $X$ is a large real number,  $t_1,\ldots, t_{k}$ are fixed real numbers satisfying $|t_i|\leq X^Q$. For any integer $n$, set
$$h(n):=\frac{1}{2} \Big( \sum^{k}_{m=1}a_mn^{-it_m} \Big).$$
Then, for any primitive Dirichlet character $\chi$ modulo $q \leq X$ and $0 \leq \sigma -1/2 \ll 1/\log x$, 
\begin{align}
\label{mainupper}
\begin{split}
 \sum^{k}_{m=1} a_m\log |L(\sigma+it_m,\chi)| \leq  2\Re \sum_{p\leq x} & \frac{h(p)\chi(p)}{p^{1/2+\max(\sigma-1/2, 1/\log x)}}\frac{\log x/p}{\log x} \\
 & +\Re\sum_{p\leq \min (x^{1/2}, \log X)} \frac{h(p^2)\chi(p^2)}{p}+(Q+1)a\frac{\log X}{\log x}+O(1).
\end{split}
\end{align}
\end{lemma}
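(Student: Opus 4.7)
The strategy is to invoke Lemma~\ref{up} at each of the points $\sigma+it_m$, form the non-negative linear combination with weights $a_m$, and then reorganise the resulting Dirichlet polynomial according to the prime-power support of $\Lambda(n)$. Applying Lemma~\ref{up} with $t=t_m$, multiplying by $a_m$, and summing over $m=1,\dots,k$, the hypotheses $q\leq X$ and $|t_m|\leq X^Q$ yield $\log q+\log^+|t_m|\leq (Q+1)\log X$, so the accumulated error contributes at most $(Q+1)a\,\log X/\log x+O(1)$. The main term collapses to
\begin{align*}
2\,\Re \sum_{n\leq x}\frac{h(n)\,\chi(n)\,\Lambda(n)}{n^{1/2+\max(\sigma-1/2,\,1/\log x)}\,\log n}\cdot\frac{\log (x/n)}{\log x},
\end{align*}
after using $\sum_m a_m n^{-it_m}=2h(n)$.

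The next step is to split this $n$-sum according to whether $n$ is a prime $p$, a prime square $p^2$, or a higher prime power $p^k$ with $k\geq 3$. For $n=p$, the identity $\Lambda(p)=\log p$ recovers exactly the first term on the right-hand side of \eqref{mainupper}. For $n=p^2$, using $\Lambda(p^2)/\log p^2=1/2$, one obtains $\Re\sum_{p\leq x^{1/2}}h(p^2)\chi(p^2)\,p^{-1-2\max(\sigma-1/2,\,1/\log x)}\log(x/p^2)/\log x$. For the higher prime powers $n=p^k$ with $k\geq 3$, the trivial bound $|h(n)|\leq a/2$ yields an absolute contribution of at most $(a/2)\sum_{k\geq 3}\sum_p p^{-k/2}\ll 1$, which is absorbed into the $O(1)$ error.

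What remains is to replace the prime-square contribution by the clean sum $\Re\sum_{p\leq \min(x^{1/2},\log X)}h(p^2)\chi(p^2)/p$. When $p\leq\min(x^{1/2},\log X)$, the weight $p^{-2\max(\sigma-1/2,1/\log x)}\log(x/p^2)/\log x$ equals $1+O(\log p/\log x)$, so by \eqref{mertenpartialsummation} the resulting discrepancy is $O(\log\log X/\log x)=O(1)$. The tail range $\log X<p\leq x^{1/2}$ (which is nonempty only when $x>(\log X)^2$) is estimated absolutely, using $|h(p^2)|\leq a/2$ and the decay of the factor $\log(x/p^2)/\log x$ against \eqref{merten}--\eqref{mertenpartialsummation}; the resulting tail is comparable to $a\log X/\log x$ and is therefore absorbed into the principal error term $(Q+1)a\,\log X/\log x$.

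The most delicate ingredient is the bookkeeping in this last truncation, as it requires balancing the weight $\log(x/p^2)/\log x$ against Mertens-type estimates for primes in the range $(\log X,x^{1/2}]$. The remainder of the argument is a routine manipulation based on the definition of $h$ and the prime-power decomposition of $\operatorname{supp}\Lambda$, following the lines of \cite{Szab}*{Proposition 2} and \cite{G&Zhao2022}*{Lemma 2}.
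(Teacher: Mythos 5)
Your opening steps agree with the intended argument: summing Lemma \ref{up} at the points $\sigma+it_m$ with weights $a_m$, using $q\leq X$, $|t_m|\leq X^Q$ to get the $(Q+1)a\log X/\log x$ term, splitting the support of $\Lambda$ into $p$, $p^2$ and $p^k$ with $k\geq 3$, and discarding the higher powers trivially are all fine. For $p\leq\min(x^{1/2},\log X)$ the replacement of the weight $p^{-2\max(\sigma-1/2,1/\log x)}\log(x/p^2)/\log x$ by $1$ indeed costs $O(1)$, though not for the reason you give: $\log\log X/\log x$ need not be $O(1)$, but the discrepancy is $\ll \sum_{p\leq x^{1/2}}\log p/(p\log x)\ll 1$ by \eqref{mertenpartialsummation} because the sum is also truncated at $x^{1/2}$.

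The genuine gap is your treatment of the range $\min(x^{1/2},\log X)<p\leq x^{1/2}$. You bound it ``absolutely'' via $|h(p^2)|\leq a/2$ and claim the tail is comparable to $a\log X/\log x$. This is false: the weight $p^{-2\max(\sigma-1/2,1/\log x)}\log(x/p^2)/\log x$ is $\gg 1$ for all $p\leq x^{1/4}$, so the absolute bound is of size $a\sum_{\log X<p\leq x^{1/4}}1/p \asymp a(\log\log x-\log\log\log X)$ by \eqref{merten}, which is not $\ll \log X/\log x+O(1)$; for instance with $x=X^{c}$ for a fixed $c>0$ (precisely the choice $x=X^{\alpha_j}$ with $\alpha_j\asymp 10^{-T}$ made in Section \ref{sec:upper bd}) the available budget is $O(1)$ while your bound diverges like $\log\log X$. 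Moreover, no argument using only $|h(p^2)|\leq a/2$ can close this: for a primitive quadratic $\chi$ (so $\chi(p^2)=1$) and $t_m=0$ the stated inequality with cutoff $\log X$ is simply false, since the prime squares genuinely contribute $\sim\tfrac a2\log\log x$. The proof the paper has in mind (following \cite{Szab}*{Proposition 2} and \cite{G&Zhao2022}) exploits that for the characters at hand $\chi^2$ is non-principal of conductor $q$ (this is exactly why the quartic case requires $\chi^2$ to remain primitive): a second application of GRH gives $\sum_{p\leq t}\chi^2(p)p^{-2it_m}\ll t^{1/2}\log^2\bigl(q(|t_m|+2)t\bigr)$, and partial summation then shows $\sum_{p>(\log X)^{C}}h(p^2)\chi(p^2)w(p)/p=O(1)$ for a suitable $C=C(Q)$, while the short range $\log X<p\leq(\log X)^{C}$ contributes $O(1)$ by \eqref{merten} since $\log\log\bigl((\log X)^{C}\bigr)-\log\log\log X=\log C$. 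So the truncation of the $p^2$-sum at $\log X$ requires oscillation of $\chi(p^2)$ under GRH, not trivial estimation, and this input is missing from your proposal.
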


\begin{proof}
The first term on the right-hand side of \eqref{mainupper} comes from the primes in the sum on the right-hand side of \eqref{variant} and the prime cubes and higher powers therein contribute $O(1)$.  Arguing as in the proof of \cite[Lemma 2.10]{G&Zhao2022}, the prime squares give rise to the second term on the right-hand side of \eqref{mainupper}.
\end{proof}

The following lemma estimates from above moments of cubic and quartic $L$-functions and can be obtained by modifying the proof of \cite[Theorem 2]{Harper}.
\begin{lemma}
\label{prop: upperbound}
With the notation as above and the truth of GRH, let $k\geq 1$ be a fixed integer and ${\bf a}=(a_1,\ldots ,a_{k}),\ t=(t_1,\ldots ,t_{k})$
 be real $k$-tuples such that $a_i \geq 0$ and that $|t_i|\leq X^Q$ for all $i$. Then, for large real number $X$ and $\sigma \geq 1/2$,
\begin{align*}
   \sumstar\limits_{\chi, q} \big| L\big(\sigma+it_1,\chi \big) \big|^{a_1} \cdots \big| L\big(\sigma+it_{k},\chi \big) \big|^{a_{k}} \ll_{{\bf a}} &  X(\log X)^{O(1)}.
\end{align*}
\end{lemma}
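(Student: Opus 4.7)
Our strategy is to follow the Soundararajan--Harper method \cites{Sound2009, Harper} for upper bounds on moments of $L$-functions, transplanted to the cubic/quartic setting with Lemma \ref{lem:2.5} as the main moment input. The target bound $X(\log X)^{O(1)}$ is much weaker than the sharp $X(\log X)^{(a_1^2+\cdots+a_k^2)/4}$, so we only need a softer version of the argument, but the skeleton is the same.

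\emph{Reduction to a short Dirichlet polynomial.} Choose $T:=(\log\log X)^2$ and set $x:=X^{1/T}$ in Lemma \ref{lemmasum}. Exponentiating \eqref{mainupper} gives, pointwise for every primitive $\chi$ modulo $q\leq X$ and every $\sigma\geq 1/2$,
\begin{equation*}
\prod_{m=1}^{k}|L(\sigma+it_m,\chi)|^{a_m}\ll (\log X)^{O(1)}\exp\!\Bigl(2\Re\sum_{p\leq x}\frac{h(p)\chi(p)}{p^{1/2+\eta}}\frac{\log(x/p)}{\log x}\Bigr),
\end{equation*}
with $\eta:=\max(\sigma-1/2,1/\log x)$. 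The outside factor absorbs the term $(Q+1)a\log X/\log x=O(T)$ from \eqref{mainupper} and the prime-square sum, which is $O(\log\log X)$ by \eqref{merten}. Note that $|h(p)|\leq (a_1+\cdots+a_k)/2=:C$.

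\emph{Harper partition and good characters.} Set $\beta_0:=1/T$, $\beta_j:=e^j\beta_0$, and let $R$ be the largest index with $\beta_R$ below a fixed small absolute constant. For $1\leq j\leq R$ put
\begin{equation*}
\mathcal{P}_j(\chi):=2\Re\sum_{x^{\beta_{j-1}}<p\leq x^{\beta_j}}\frac{h(p)\chi(p)}{p^{1/2+\eta}}\frac{\log(x/p)}{\log x},
\end{equation*}
and call $\chi$ \emph{good} if $|\mathcal{P}_j(\chi)|\leq\beta_j^{-3/4}$ for every $j\leq R$, and otherwise \emph{bad at level $j$} for the least such $j$. On the good set we apply the truncated-exponential inequality $e^{\mathcal{P}_j(\chi)}\leq 2\sum_{n\leq N_j}\mathcal{P}_j(\chi)^n/n!$ valid once $N_j\geq e|\mathcal{P}_j(\chi)|$, with $N_j:=\lceil 10\,\beta_j^{-3/4}\rceil$. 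Multiplying out $\prod_j e^{\mathcal{P}_j(\chi)}$, summing over $\chi$, and invoking Lemma \ref{lem:2.5} with $y=x^{\beta_j}$ and moment order $N_j$ on each block (the error-term constraint there reads $N_j\log y\ll \log X$, i.e.\ $N_j\beta_j\ll T$, which holds comfortably) produces a contribution $\ll X(\log X)^{O(1)}$ after using $|h(p)|\leq C$ and \eqref{merten}.

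\emph{Bad characters and the main obstacle.} For $\chi$ bad at level $j$, a high-moment Chebyshev argument based on the $(2N_j)$-th moment bound of Lemma \ref{lem:2.5} yields a count $\ll X\exp(-c\,\beta_j^{-1/4})$; on these characters, Lemma \ref{lemmasum} re-applied with the shortened cutoff $x':=x^{\beta_{j-1}}$ gives the crude but sufficient pointwise bound $\prod_m|L(\sigma+it_m,\chi)|^{a_m}\ll (\log X)^{O(1/\beta_{j-1})}$, and the super-polynomial character saving dominates this loss for every $j\geq 1$; the boundary level $j=R$ (constant-size $\beta_R$) contributes at most $X(\log X)^{O(1)}$ by the same token. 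The principal technical obstacle is precisely this calibration: Lemma \ref{lem:2.5} is ``Gaussian'' only in the restricted range $N_j\beta_j\ll T$, so the geometric growth $\beta_j=e^j\beta_0$ is essentially forced, in order that the allowed moment $N_j\asymp\beta_j^{-3/4}$ fits inside the Gaussian regime at \emph{every} scale while also keeping $R$ of size $O(\log T)=O(\log\log\log X)$, so that the product over $j$ of the loss factors still lies within $(\log X)^{O(1)}$.
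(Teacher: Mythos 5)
Your reduction step already breaks down with the parameters you chose. Setting $x=X^{1/T}$ with $T=(\log\log X)^{2}$ in Lemma \ref{lemmasum}, the term $(Q+1)a\log X/\log x=(Q+1)aT$ in \eqref{mainupper} exponentiates to $e^{O((\log\log X)^{2})}=(\log X)^{O(\log\log X)}$, which is \emph{not} $(\log X)^{O(1)}$; to absorb this prefactor into a fixed power of $\log X$ you would need $T=O(\log\log X)$, or else (as in the paper's Section 3) a character-dependent truncation in which the loss $e^{O(\log X/\log x)}$ is only ever paid on a correspondingly rare set of characters. A secondary gap in the ``good'' case: Lemma \ref{lem:2.5} bounds moments of a single prime polynomial, so it cannot simply be ``invoked on each block'' of the multiplied-out product $\prod_j E_{N_j}(\mathcal{P}_j)$; one needs either a lossy H\"older step or, as in the paper, the orthogonality input of Lemma \ref{PropDirpoly} to evaluate such products.

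The decisive failure, however, is in your treatment of the bad characters. First, the claimed pointwise bound $(\log X)^{O(1/\beta_{j-1})}$ from re-applying Lemma \ref{lemmasum} with cutoff $x'=x^{\beta_{j-1}}$ is unjustified: the prime sum over $p\le x'$ does not vanish and must be controlled via the goodness of $\chi$ at scales $i<j$, and the truncation term is $(Q+1)a\log X/\log x'=(Q+1)aT/\beta_{j-1}$, not $O(1/\beta_{j-1})$. Second, and fatally, with Chebyshev moment order $N_j\asymp\beta_j^{-3/4}$ the measure of the level-$j$ bad set is at best $X\exp\bigl(-c\,\beta_j^{-3/4}\log(1/\beta_j)\bigr)$ (your $X\exp(-c\beta_j^{-1/4})$ is even weaker), while the pointwise loss on that set is $\exp\bigl(O(T/\beta_{j-1})\bigr)$; comparing exponents, $T^{2}e^{-j}$ dominates $T^{3/4}e^{-3j/4}\log T$ for every $j\le R\approx\log T$, so the bad-level contributions are unbounded. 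The mechanism that closes Harper's argument, and which the paper uses in Section 3, is to take the Chebyshev moment as large as the off-diagonal constraint permits, of order $1/\beta_{j}$ rather than $\beta_j^{-3/4}$, so the saving becomes $\exp\bigl(-c\log(1/\beta_j)/\beta_j\bigr)$ and the factor $\log(1/\beta_j)$ (made large by the choice of the smallest scale) beats the loss $e^{O(1/\beta_{j-1})}=e^{O(e/\beta_j)}$; this calibration is absent from your sketch. Note also that the paper's own proof of this lemma is far lighter than the full Harper machinery: a dyadic decomposition plus H\"older reduces to the $2k$-th moment of a single $|L(\sigma+it,\chi)|$, and then a Soundararajan-type large-deviation bound, obtained by taking $x=X^{10(Q+1)/V}$ and $m\asymp V$ in Lemma \ref{lem:2.5}, gives $\mathcal{N}(V,X)\ll Xe^{-4kV}$ for $V\gg\log\log X$, which is summed over $V$ to conclude.
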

\begin{proof}
  We consider only the cubic case here due to similarities of the proofs. The assertion of the lemma may be obtained by modifying the proofs of \cite[Proposition 5.2--5.3]{G&Zhao2022}.  However, as we do not strive for precise exponents in the powers of $\log X$ in the present lemma, we shall follow the proof of \cite[Proposition 1]{Szab}. For this, upon dividing into dyadic blocks and using H\"older’s inequality with the exponents $a_1/a, \cdots , a_k/a$, it suffices to show that for any fixed integer $w \geq1$ and any real  $|t|\leq X^Q$, 
\begin{align*}
   \sum_{\substack{(q,3)=1 \\ X/2 < q \leq X}}\;  \sumstar_{\substack{\chi \shortmod{q} \\ \chi^3 = \chi_0}} \big| L\big(\sigma+it,\chi \big) \big|^{2w}  \ll_{w} &  X(\log X)^{O(1)}.
\end{align*}   
   
From Lemmas~\ref{RS} and \ref{up}, for $x \leq X$, 
\begin{align}
\label{mainupper1}
\begin{split}
  \log |L(\sigma+it,\chi)| \leq  \Re \sum_{p\leq x} \frac{\chi(p)}{p^{1/2+\max(\sigma-1/2, 1/\log x)+it}}\frac{\log x/p}{\log x}
    +\frac 12\log \log X+(Q+1)\frac{\log X}{\log x}+O(1).
\end{split}
\end{align}

  Let $\mathcal{N}(V, X)$  be the number of primitive cubic Dirichlet characters $\chi$ modulo $q$ with $X/2 < q\leq X$ and $\log|L(\sigma+it, \chi)|\geq V$. 
  Suppose that $V \geq \max \{2\log \log X,10(Q+1)\}$ and $\chi$  is counted in $\mathcal{N}(V, X)$.  We set $x=X^{10(Q+1)/V}$ in \eqref{mainupper1}.  So for this character $\chi$,
\begin{align*}
\begin{split}
 \Re \sum_{p\leq x} \frac{\chi(p)}{p^{1/2+\max(\sigma-1/2, 1/\log x)+it}}\frac{\log x/p}{\log x} \geq \frac V2.
\end{split}
\end{align*}

We take $m = \lceil V/(100(Q+1))\rceil $ in Lemma \ref{lem:2.5}, getting
\begin{align*} 
\begin{split}
\Big( \frac {V}2 \Big)^{2m}  \mathcal{N}(V, X) 
 \leq & \sum_{\substack{(q,3)=1 \\ X/2<q \leq X}} \ \sumstar_{\substack{\chi \shortmod{q} \\ \chi^3 = \chi_0}} \Big| \Re \sum_{p\leq x} \frac{\chi(p)}{p^{1/2+\max(\sigma-1/2, 1/\log x)+it}}\frac{\log x/p}{\log x} \Big|^{2m} \\
 \ll & X\sum^{\lceil m/3\rceil }_{i=0}m! \binom {m}{3i} \binom {3i}{i}\binom {2i}{i}\frac {i!}{36^i} \Big (\sum_{p \leq x} \frac {1}{p}\Big )^{m-3i}\Big (\sum_{p \leq x} \frac {1}{p^{3/2}}\Big )^{2i}+X^{1/2+\varepsilon}x^{2m+2m\varepsilon} \Big( \sum_{p \leq x} \frac {1}{p} \Big)^{m}.
\end{split}
\end{align*}

  We argue as in the proof of \cite[Proposition 5.2]{G&Zhao2022} and apply Lemma \ref{RS}, obtaining, for some constant $C_0$, 
\begin{align} 
\label{Nvest}
\begin{split}
\Big( \frac {V}2 \Big)^{2m}  \mathcal{N}(V, X) 
 \ll & X\sum^{\lceil m/3\rceil }_{i=0}m! \binom {m}{3i} \binom {3i}{i}\binom {2i}{i}\frac {i!}{36^i} \Big (\sum_{p \leq x} \frac {1}{p}\Big )^{m-3i}\Big (\sum_{p \leq x} \frac {1}{p^{3/2}}\Big )^{2i}+X^{1/2+\varepsilon}x^{2m+2m\varepsilon} \Big( \sum_{p \leq x} \frac {1}{p} \Big)^{m} \\
 \ll & X m^2 \max_{0 \leq i \leq \lceil m/3\rceil }\Big (\frac {m}{e} \Big )^m \frac {(2m)^{3i}}{i^{2i}}\Big (\log \log X+C_0 \Big )^{m-3i} \\
 \ll & X m^2 \max_{0 \leq x \leq m/3+1 }\Big (\frac {m}{e} \Big )^m \frac {(2m)^{3x}}{x^{2x}}\Big (\log \log X+C_0 \Big )^{m-3x}.
\end{split}
\end{align}

   We set $y=2m/( (\log \log X+C_0 )$ to observe that the derivative of the function $f(x) = y^{3x}/x^{2x}$ only vanishes at $x_0=y^{3/2}/e$ and $f(x_0)=e^{2x_0}$. We may further assume that $x_0 \leq m/3+1$ for otherwise it does not contribute to the maximum value in the last expression of \eqref{Nvest}. It thus follows that $f(x_0) \leq e^{2m/3+2}$ in this case.  Hence the maximum value in the last expression of \eqref{Nvest} occurs either at $i=0$, $m/3+1$ or $x_0$, yielding 
\begin{align*}
 & \Big( \frac {V}2 \Big)^{2m}  \mathcal{N}(V, X) 
 \ll  X m^2\Big (\frac {m}{e} \Big )^m \max \Big ( \Big (\log \log X+C_0\Big )^m, e^{2m/3},  6^mm^{m/3} \Big ).
\end{align*}

From the above, we infer that if $V \geq e^{10000(Q+1)(k+1)}(\log \log X+C_0 )$, then
\begin{equation}
\label{equ:bd-S-2}
\mathcal{N}(V, X)  \ll X\operatorname{exp}\left(-4wV\right).
\end{equation}

   We now set $V_0=\max \{ e^{10000(Q+1)(w+1)}(\log \log X+C_0 ), 2\log \log X,10(Q+1)\}$, so that \eqref{equ:bd-S-2} holds for $V \geq V_0$. Note moreover $\mathcal{N}(V, X) \ll X$ by setting $c=1$ in Lemma~\ref{PropDirpoly}. Now partial summation renders that
\begin{align*}
   \sum_{\substack{(q,3)=1 \\ X/2 < q \leq X}}\;  \sumstar_{\substack{\chi \shortmod{q} \\ \chi^3 = \chi_0}} \big| L\big(\sigma+it,\chi \big) \big|^{2w} 
   \ll Xe^{2wV_0}+\sum^{\infty}_{V=V_0}\mathcal{N}(V, X)e^{(V+1)2w} \ll  X(\log X)^{O(1)}.
\end{align*}  
  This completes the proof of the lemma.
\end{proof}

\section{Proof of Theorem \ref{t1} }
\label{sec:upper bd}

  As the proofs are similar, we give only the proof for the cubic case here.  We may also assume that $X/2 < q \leq X$ upon dividing $q$ into dyadic blocks.   Setting $\sigma=1/2$ in \eqref{mainupper} and exponentiating both sides, the resulting expression renders
\begin{align}
\label{basicest}
\begin{split}
 \big| L\big( \tfrac12+it_1,\chi\big) \big|^{a_1} & \cdots \big| L\big( \tfrac12+it_{k},\chi \big) \big|^{a_{k}} \\
 \ll &  \exp \left(\left( \Re\sum_{\substack{  p \leq x }} \frac{2h(p)\chi (p)}{p^{1/2+1/\log x}}
 \frac{\log (x/p)}{\log x} +
 \Re \sum_{\substack{  p \leq  \min (x^{1/2}, \log X) }} \frac{h(p^2)\chi (p^2)}{p} 
 +(Q+1)a\frac{\log X}{\log x}\right) \right ).
\end{split}
 \end{align}
 
   We now follow the approach by A. J. Harper in \cite{Harper} to define for a large number $T$,
$$ \alpha_{0} = \frac{\log 2}{\log X}, \;\;\;\;\; \alpha_{i} = \frac{20^{i-1}}{(\log\log X)^{2}} \;\;\; \mbox{for all} \; i \geq 1, \quad
\mathcal{J} = \mathcal{J}_{k,X} = 1 + \max\{i : \alpha_{i} \leq 10^{-T} \} . $$

Moreover, set
\[ {\mathcal M}_{i,j}(\chi) = \sum_{X^{\alpha_{i-1}} < p \leq X^{\alpha_{i}}}  \frac{2h(p)\chi (p)}{p^{1/2+1/(\log X^{\alpha_{j}})}} \frac{\log (X^{\alpha_{j}}/p)}{\log X^{\alpha_{j}}}, \quad 1\leq i \leq j \leq \mathcal{J} , \]
and
\[ P_{m}(\chi )= \sum_{2^{m} < p \leq 2^{m+1}} \frac{h(p^2)\chi (p^2)}{p}, \quad  0 \leq m \leq \frac{\log\log X}{\log 2}. \]

Let $\mathcal{C}(X)$ be the set of primitive cubic Dirichlet characters of conductor $q$ with $X/2 < q \leq X$.  Also define, for $0 \leq j \leq \mathcal{J}$,
\begin{align*}
 \mathcal{S}(j) =& \left\{ \chi \in \mathcal{C}(X) : | {\mathcal M}_{i,l}(\chi)| \leq \alpha_{i}^{-3/4} \; \; \mbox{for all}  \; 1 \leq i \leq j, \; \mbox{and} \; i \leq l \leq \mathcal{J}, \right. \\
 & \hspace*{3cm} \left. \;\;\;\;\; \text{but }  | {\mathcal M}_{j+1,l}(\chi)| > \alpha_{j+1}^{-3/4} \; \text{ for some } j+1 \leq l \leq \mathcal{J} \right\} . \\
 \mathcal{S}(\mathcal{J}) =& \left\{ \chi \in \mathcal{C}(X) : |{\mathcal M}_{i, \mathcal{J}}(\chi)| \leq \alpha_{i}^{-3/4} \; \mbox{for all}  \; 1 \leq i \leq \mathcal{J} \right\}, \; \mbox{and} \\
\mathcal{P}(m) =&  \left\{ \chi \in \mathcal{C}(X) : | P_{m}(\chi)| > 2^{-m/10} , \; \text{but} \; | P_{n}(\chi)| \leq 2^{-n/10} \; \mbox{for all} \; m+1 \leq n \leq \frac{\log\log X}{\log 2} \right\}.
\end{align*}

    We shall set $x=X^{\alpha_j}$ for $j \geq 1$ in \eqref{basicest} in what follows, so that the second summation on the right-hand side of \eqref{basicest} is over $p \leq \log X$.  Then $| P_{n}(\chi)| \leq 2^{-n/10}$ for all $n$ if $\chi \not \in \mathcal{P}(m)$ for any $m$, which leads to
\[ \Re \sum_{\substack{  p \leq \log X }}  \frac{h(p^2)\chi (p^2)}{p} = O(1). \]
 As the treatment for case $\chi \not \in \mathcal{P}(m)$ for any $m$ is easier compared to the other cases, we may assume that $\chi \in \mathcal{P}(m)$ for some $m$.   We further note that
$$ \mathcal{P}(m)= \bigcup_{m=0}^{\log \log X/2}\bigcup_{j=0}^{ \mathcal{J}} \Big (\mathcal{S}(j)\bigcap \mathcal{P}(m) \Big ), $$
so that it suffices to show that
\begin{align}
\label{sumovermj}
\begin{split}
 \sum_{m=0}^{\log \log X/2}\sum_{j=0}^{\mathcal{J}} & \sum_{\chi \in \mathcal{S}(j)\bigcap \mathcal{P}(m)} \big| L\big( \tfrac12+it_1,\chi\big) \big|^{a_1} \cdots \big| L\big(\tfrac12+it_{k},\chi \big) \big|^{a_{k}} \\
 & \ll X(\log X)^{(a_1^2+\cdots +a_{k}^2)/4}\prod_{1\leq j<l\leq k}  \big|\zeta(1+i(t_j-t_l)+\tfrac 1{\log X}) \big|^{a_ja_l/2}.
\end{split}
\end{align}

   Observe that
\begin{align*}
\text{meas}(\mathcal{P}(m)) \leq \sum_{\substack{(q,3)=1}} \ \sumstar_{\substack{\chi \shortmod{q} \\ \chi^3 = \chi_0}}
\Big (2^{m/10} |P_m(\chi)| \Big )^{2\lceil 2^{m/2}\rceil }.
\end{align*}
We now apply Lemma \ref{lem:2.5} and recall the bound (see \cite[(5.3)]{G&Zhao2022})
\begin{align*}
  m! \binom {m}{3i} \binom {3i}{i}\binom {2i}{i}\frac {i!}{36^i} \leq  m^{4m/3+1}.
\end{align*}
  Together with the inequality $|h(p^2)| \leq a/2$, we infer that for $m$ large enough in terms of $a$,
\begin{align}
\label{Pmest}
\begin{split}
  \text{meas}(\mathcal{P}(m)) \ll & X(2^{m/10})^{2\lceil 2^{m/2}\rceil } \sum^{\lceil \lceil 2^{m/2}\rceil /3\rceil }_{i=0}
  \Big (\lceil 2^{m/2}\rceil \Big )^{4\lceil 2^{m/2}\rceil /3+1} \Big (\sum_{2^m < p } \frac {a^2}{4p^2}\Big )^{\lceil 2^{m/2}\rceil -3i}
  \Big (\sum_{2^m < p } \frac {a^3}{8p^3}\Big )^{2i} \\
   \ll & X 2^m (2^{2m/3+m/5})^{\lceil 2^{m/2}\rceil }\Big (\sum_{2^m < p } \frac {a^3}{p^2}\Big )^{\lceil 2^{m/2}\rceil } \ll X 2^m (a^32^{-m/3+m/5})^{2^{m/2}} \ll X  2^{-2^{m/2}}.
\end{split}
 \end{align}
Now the Cauchy-Schwarz inequality and Lemma \ref{prop: upperbound} yield that for $2^{m} \geq (\log\log X)^{3}$ and $X$ large enough,
\begin{align*}
 \sum_{\chi \in  \mathcal{P}(m)} & \big| L\big(\tfrac12+it_1,\chi\big) \big|^{a_1} \cdots \big| L\big(\tfrac12+it_{k},\chi \big) \big|^{a_{k}} \\
& \leq \left( \text{meas}(\mathcal{P}(m)) \cdot
\sum_{\substack{(q,3)=1 \\ X/2<q \leq X}}  \sumstar_{\substack{\chi \shortmod{q} \\ \chi^3 = \chi_0}}\big| L\big(\tfrac12+it_1,\chi\big) \big|^{2a_1} \cdots \big| L\big(\tfrac12+it_{k},\chi \big) \big|^{2a_{k}} \right)^{1/2}
 \\
& \ll \left( X \exp\left( -(\log 2)(\log\log X)^{3/2} \right) X (\log X)^{O(1)} \right)^{1/2} \ll X.
\end{align*}

The above implies  that we may also assume that $0 \leq m \leq (3/\log 2)\log\log\log X$.  Similarly,
\begin{align}
\label{S0est}
\begin{split}
\text{meas}(\mathcal{S}(0)) \ll & \sum_{\substack{(q,3)=1 }} \ \sumstar_{\substack{\chi \shortmod{q} \\ \chi^3 = \chi_0}}
\sum^{\mathcal{J}}_{l=1}
\Big ( \alpha^{3/4}_{1}{|\mathcal
M}_{1, l}(\chi)| \Big)^{2\lceil 1/(10\alpha_{1})\rceil }=
\sum^{\mathcal{J}}_{l=1}\sum_{\substack{(q,3)=1 }} \ \sumstar_{\substack{\chi \shortmod{q} \\ \chi^3 = \chi_0}}\Big ( \alpha^{3/4}_{1}{|\mathcal
M}_{1, l}(\chi)| \Big)^{2\lceil 1/(10\alpha_{1})\rceil } .
\end{split}
\end{align}
   Note that
\begin{align*}
 \mathcal{J} \leq \log\log\log X , \quad \alpha_{1} = \frac{1}{(\log\log X)^{2}}  \quad \mbox{and} \quad \sum_{p \leq X^{1/(\log\log X)^{2}}} \frac{1}{p} \leq \log\log X ,
\end{align*}
  where the last bound is a consequence of Lemma \ref{RS}.  We apply these estimates and Lemma \ref{lem:2.5} to evaluate the last sums in \eqref{S0est} above in a manner similar to the \eqref{Pmest}.  From this consideration, emerges the estimate
\begin{align*}
\text{meas}(\mathcal{S}(0)) \ll &
\mathcal{J}X e^{-1/\alpha_{1}}\ll X e^{-(\log\log X)^{2}/10}  .
\end{align*}
The Cauchy-Schwarz inequality and Lemma~\ref{prop: upperbound} lead to
\begin{align*}
 \sum_{\chi \in  \mathcal{S}(0)}  & \big| L\big(\tfrac12+it_1,\chi\big) \big|^{a_1} \cdots \big| L\big(\tfrac12+it_{k},\chi \big) \big|^{a_{k}}  \\
\leq &   \left( \text{meas}(\mathcal{S}(0)) \cdot
\sum_{\substack{(q,3)=1 \\ X/2<q \leq X}}  \sumstar_{\substack{\chi \shortmod{q} \\ \chi^3 = \chi_0}}\big| L\big(1/2+it_1,\chi\big) \big|^{2a_1} \cdots \big| L\big(1/2+it_{k},\chi \big) \big|^{2a_{k}} \right)^{1/2}
 \\
 \ll & \left( X \exp\left( -(\log\log X)^{2}/10 \right) X (\log X)^{O(1)} \right)^{1/2} \ll X.
\end{align*}

  Thus we may further assume that $j \geq 1$. Note that when $\chi \in \mathcal{S}(j)$, we set $x=X^{\alpha_j}$ in \eqref{basicest} to arrive at
\begin{align*}
\begin{split}
 & \big| L\big(\tfrac12+it_1,\chi\big) \big|^{a_1} \cdots \big| L\big(\tfrac12+it_{k},\chi \big) \big|^{a_{k}} \ll \exp \left(\frac {(Q+1)a}{\alpha_j} \right) \exp \Big (
 \Re\sum^j_{i=1}{\mathcal M}_{i,j}(\chi)+ \Re\sum^{\log \log X/2}_{l=0}P_l(\chi) \Big ).
\end{split}
 \end{align*}

   When restricting the sum of $\big| L\big(\tfrac12+it_1,\chi\big) \big|^{a_1} \cdots \big| L\big(\tfrac12+it_{k},\chi \big) \big|^{a_{k}}$ over $\mathcal{S}(j)\bigcap \mathcal{P}(m)$, our treatments below require us to separate the sums over $p \leq 2^{m+1}$ on the right-hand side of the above expression from those over $p>2^{m+1}$. For this, we note that if $\chi \in \mathcal{P}(m)$, then
\begin{align*}
\begin{split}
  \Re \sum_{  p \leq 2^{m+1}} & \frac{2h(p)\chi (p)}{p^{1/2+1/(\log X^{\alpha_{j}})}}  \frac{\log (X^{\alpha_{j}}/p)}{\log X^{\alpha_{j}}}+
  \Re \sum_{p \leq \log X} \frac{h(p^2)\chi (p^2)}{p} 
   \\
 \leq &\Re \sum_{  p \leq 2^{m+1}}  \frac{2h(p)\chi (p)}{p^{1/2+1/(\log X^{\alpha_{j}})}}  \frac{\log (X^{\alpha_{j}}/p)}{\log X^{\alpha_{j}}}+
 \Re \sum_{p \leq 2^{m+1}} \frac{h(p^2)\chi (p^2)}{p}  +O(1)  \leq  a2^{m/2+3}+O(1).
\end{split}
 \end{align*}

 It follows from the above that
\begin{align}
\label{LboundinSP}
\begin{split}
  \sum_{\chi \in \mathcal{S}(j)\bigcap \mathcal{P}(m)} & \big| L\big(\tfrac12+it_1,\chi\big) \big|^{a_1} \cdots \big| L\big(\tfrac12+it_{k},\chi \big) \big|^{a_{k}}   \\
   \ll & e^{a2^{m/2+4}} \sum_{\chi \in \mathcal{S}(j)\bigcap \mathcal{P}(m)}
 \exp \left(\frac {(Q+1)a}{\alpha_j} \right)\exp \Big (  \Re \sum_{ 2^{m+1}< p \leq X^{\alpha_j} }
 \frac{2h(p)\chi (p)}{p^{\tfrac{1}{2}+1/(\log X^{\alpha_{j}})}}  \frac{\log (X^{\alpha_{j}}/p)}{\log X^{\alpha_{j}}}\Big )
  \\
\ll &  e^{a2^{m/2+4}} \exp \left(\frac {(Q+1)a}{\alpha_j} \right)\sum_{\chi \in \mathcal{S}(j)} \Big (2^{m/10}|P_m(\chi)| \Big )^{2\lceil 2^{m/2}\rceil }
\exp \Big ( \Re{\mathcal M}'_{1,j}(\chi)+\Re \sum^j_{i=2}{\mathcal M}_{i,j}(\chi)\Big ),
\end{split}
 \end{align}
where
\begin{align*}
\begin{split}
 {\mathcal M}'_{1,j}(\chi)= \sum_{ 2^{m+1}< p \leq X^{\alpha_1} }
 \frac{2h(p)\chi (p)}{p^{1/2+1/\log X^{\alpha_{j}}}}  \frac{\log (X^{\alpha_{j}}/p)}{\log X^{\alpha_{j}}}.
\end{split}
 \end{align*}

    We note that if $0 \leq m \leq (3/\log 2)\log\log\log X$ and $X$ large enough, then
\begin{align*}
\begin{split}
 \Big| \sum_{ p< 2^{m+1}  }
 \frac{2h(p)\chi (p)}{p^{1/2+1/\log X^{\alpha_{j}}}}  \frac{\log (X^{\alpha_{j}}/p)}{\log X^{\alpha_{j}}} \Big| \leq a\sum_{ p< 2^{m+1}  }
 \frac{1}{\sqrt{p}} \leq \frac {100a \cdot 2^{m/2}}{m+1} \leq 100a(\log \log X)^{3/2}(\log \log \log X)^{-1},
\end{split}
 \end{align*}
where the last inequality follows from partial summation and \eqref{mertenpartialsummation}. \newline

  It follows from this that if $\chi \in \mathcal{S}(j)$ and $X$ large enough,
\begin{align*}
\begin{split}
| {\mathcal M}'_{1,j}(\chi) | \leq 100a(\log \log X)^{3/2}(\log \log \log X)^{-1}+|{\mathcal M}_{1,j}(\chi)| \leq 1.01\alpha^{-3/4}_1=1.01(\log \log X)^{3/2}.
\end{split}
 \end{align*}
 The above bound allows us to argue as in the proof of \cite[Lemma 5.2]{Kirila} (which essentially shows that the exponential function $e^x$ can be approximated by its truncated Tayler expansion depending on the size of $x$) to see that 
\begin{align*}
\begin{split}
\exp \Big ( \Re {\mathcal M}'_{1,j}(\chi)\Big )=& \exp \Big ( \tfrac 12 {\mathcal M}'_{1,j}(\chi)+\tfrac 12 \overline{{\mathcal M}'_{1,j}(\chi)} \Big ) \\
\ll &   E_{e^2a\alpha^{-3/4}_1}(\tfrac 12{\mathcal M}'_{1,j}(\chi))E_{e^2a\alpha^{-3/4}_1}(\tfrac 12\overline{{\mathcal M}'_{1,j}(\chi)}) = \Big | E_{e^2a\alpha^{-3/4}_1}(\tfrac 12{\mathcal M}'_{1,j}(\chi)) \Big |^2, 
\end{split}
 \end{align*}
    where for any real numbers $x$ and $\ell \geq 0$,
\begin{align*}
  E_{\ell}(x) = \sum_{j=0}^{\lceil\ell \rceil} \frac {x^{j}}{j!}.
\end{align*}

   We then estimate $\exp \Big ( \Re {\mathcal M}'_{1,j}(\chi)+ \Re\sum^j_{i=2}{\mathcal M}_{i,j}(\chi)\Big )$ similarly by noting that $|{\mathcal M}_{i, j}(\chi)|\leq  \alpha^{-3/4}_i$ for $\chi \in \mathcal{S}(j)$. This leads to 
\begin{align*}
\begin{split}
\exp \Big ( \Re {\mathcal M}'_{1,j}(\chi)+ \Re\sum^j_{i=2}{\mathcal M}_{i,j}(\chi)\Big ) \ll \Big | E_{e^2a\alpha^{-3/4}_1}(\tfrac 12{\mathcal M}'_{1,j}(\chi)) \Big |^2
\prod^j_{i=2}\Big | E_{e^2a\alpha^{-3/4}_i}(\tfrac 12{\mathcal M}_{i,j}(\chi)) \Big |^2.
\end{split}
 \end{align*}

  Let $W(t)$ be a non-negative smooth function supported on $(1/4, 3/2)$ such that $W(t) \gg 1$ for $t \in (1/2, 1)$. It thus follows from the description on $\mathcal{S}(j)$ that if $j \geq 1$,
\begin{align*}
\begin{split}
 \sum_{\chi \in \mathcal{S}(j)\bigcap \mathcal{P}(m)} & \big| L\big(\tfrac12+it_1,\chi\big) \big|^{a_1} \cdots \big| L\big(\tfrac12+it_{k},\chi \big) \big|^{a_{k}}  \\
 \ll & e^{a2^{m/2+4}} \exp \left(\frac {(Q+1)a}{\alpha_j} \right)
 \sum^{R}_{l=j+1} \sum_{\substack{(q,3)=1 \\ X/2<q \leq X}} \ \sumstar_{\substack{\chi \shortmod{q} \\ \chi^3 = \chi_0}}\Big (2^{m/10}|P_m(\chi)| \Big )
 ^{2\lceil 2^{m/2}\rceil } \\
& \hspace*{1cm} \times \exp \Big ( \Re {\mathcal M}'_{1,j}(\chi)+ \Re \sum^j_{i=2}{\mathcal M}_{i,j}(\chi)\Big )\Big ( \alpha^{3/4}_{j+1}\big|{\mathcal
M}_{j+1, l}(\chi)\big |\Big)^{2\lceil 1/(10\alpha_{j+1})\rceil } \\
\ll &  e^{a2^{m/2+4}}\exp \left(\frac {(Q+1)a}{\alpha_j} \right)\sum^{R}_{l=j+1}
\sum_{\substack{(q,3)=1 }} \ \sumstar_{\substack{\chi \shortmod{q} \\ \chi^3 = \chi_0}}
\Big (2^{m/10}|P_m(\chi)| \Big )^{2\lceil 2^{m/2}\rceil } \\
& \hspace*{1cm} \times \Big | E_{e^2a\alpha^{-3/4}_1}(\tfrac 12{\mathcal M}'_{1,j}(\chi)) \Big |^2
\prod^j_{i=2}\Big | E_{e^2a\alpha^{-3/4}_i}(\tfrac 12{\mathcal M}_{i,j}(\chi)) \Big |^2\Big ( \alpha^{3/4}_{j+1}\big |{\mathcal
M}_{j+1, l}(\chi)\big |\Big)^{2\lceil 1/(10\alpha_{j+1})\rceil }W \big( \frac q{X} \big) .
\end{split}
 \end{align*}

 Note that, for $1 \leq j \leq \mathcal{I}-1$,
\begin{align*}
\mathcal{I}-j \leq \frac{\log(1/\alpha_{j})}{\log 20} \quad \mbox{and} \quad \sum_{X^{\alpha_{j}} < p \leq X^{\alpha_{j+1}}} \frac{1}{p}
 = \log \alpha_{j+1} - \log \alpha_{j} + o(1) = \log 20 + o(1) \leq 10 .
\end{align*}
Therefore we argue in a manner similar to the proof of \cite[Theorem 3]{G&Zhao2022} upon using Lemma \ref{PropDirpoly}  and \eqref{Pmest}. Observe that $g(p)=1+O(1/p)$ for the function $g$ defined in \eqref{gc}. It follows that by taking $T$ large enough,
\begin{align*}
\begin{split}
  \sum^{ \mathcal{I}}_{l=j+1} \sum_{\substack{(q,3)=1 }} & \ \sumstar_{\substack{\chi \shortmod{q} \\ \chi^3 = \chi_0}}
\Big (2^{m/10}|P_m(\chi)| \Big )^{2\lceil 2^{m/2}\rceil } \\
& \hspace*{1cm} \times \Big | E_{e^2a\alpha^{-3/4}_1}(\tfrac 12{\mathcal M}'_{1,j}(\chi)) \Big |^2
\prod^j_{i=2}\Big | E_{e^2a\alpha^{-3/4}_i}(\tfrac 12{\mathcal M}_{i,j}(\chi)) \Big |^2\Big ( \alpha^{3/4}_{j+1}\big |{\mathcal
M}_{j+1, l}(\chi)\big |\Big)^{2\lceil 1/(10\alpha_{j+1})\rceil }W \big(\frac q{X} \big)  \\
\ll & X(\mathcal{I}-j)e^{-44a(Q+1)/\alpha_{j+1}} 2^m (2^{-2m/15})^{\lceil 2^{m/2}\rceil } \prod_{p \leq X^{\alpha_j}}\left( 1+\frac {|h(p)|^2}{p}+O \left( \frac 1{p^2} \right) \right) \\
\ll & X(\mathcal{I}-j)e^{-44a(Q+1)/\alpha_{j+1}} 2^m (2^{-2m/15})^{\lceil 2^{m/2}\rceil } \prod_{p \leq X}\left( 1+\frac {|h(p)|^2}{p}+O \left( \frac 1{p^2} \right) \right) \\
\ll & e^{-42a(Q+1)/\alpha_{j+1}} 2^m (2^{-2m/15})^{\lceil 2^{m/2}\rceil }\exp\left(\sum_{p \leq X} \frac {|h(p)|^2}{p}+O \left( \frac 1{p^2} \right) \right),
\end{split}
 \end{align*}
 where the well-known inequality $1+x \leq e^x$ for all $x \in \mr$ is used in the last step. \newline

 As $|h(p)|^2=\sum^k_{j=1}a^2_j/4+\sum_{1 \leq j<l\leq k}\frac {a_ja_l}{2}\cos (|t_j-t_l|\log p)$, we deduce from the above, \eqref{merten} and \eqref{mertenstype} that
\begin{align*}
\begin{split}
 \sum^{ \mathcal{I}}_{l=j+1}
\sum_{\substack{(q,3)=1}}  & \ \sumstar_{\substack{\chi \shortmod{q} \\ \chi^3 = \chi_0}}
\Big (2^{m/10}|P_m(\chi)| \Big )^{2\lceil 2^{m/2}\rceil } \\
& \hspace*{1cm} \times \Big | E_{e^2a\alpha^{-3/4}_1}(\tfrac 12{\mathcal M}'_{1,j}(\chi)) \Big |^2
\prod^j_{i=2}\Big | E_{e^2a\alpha^{-3/4}_i}(\tfrac 12{\mathcal M}_{i,j}(\chi)) \Big |^2\Big ( \alpha^{3/4}_{j+1}\big |{\mathcal
M}_{j+1, l}(\chi)\big |\Big)^{2\lceil 1/(10\alpha_{j+1})\rceil }W \big(\frac q{X}\big)  \\
\ll  &  e^{-42a(Q+1)/\alpha_{j+1}} 2^m (2^{-2m/15})^{\lceil 2^{m/2}\rceil }X(\log X)^{(a_1^2+\cdots +a_{k}^2)/4}\prod_{1\leq j<l\leq k}  \big|\zeta(1+i(t_j-t_l)+\tfrac 1{\log X}) \big|^{a_ja_l/2}.
\end{split}
 \end{align*}

   We then conclude from the above and \eqref{LboundinSP} that (by noting that $20/\alpha_{j+1}=1/\alpha_j$)
\begin{align*}
\begin{split}
 \sum_{\chi \in \mathcal{S}(j)\bigcap \mathcal{P}(m)} & \big| L\big(\tfrac12+it_1,\chi\big) \big|^{a_1} \cdots \big| L\big(\tfrac12+it_{k},\chi \big) \big|^{a_{k}} \\
\ll & e^{-a(Q+1)/(10\alpha_{j})}2^m  e^{a2^{m/2+4}} (2^{-2m/15})^{\lceil 2^{m/2}\rceil }X(\log X)^{(a_1^2+\cdots +a_{k}^2)/4}\prod_{1\leq j<l\leq k}  \big|\zeta(1+i(t_j-t_l)+\tfrac 1{\log X}) \big|^{a_ja_l/2}.
\end{split}
 \end{align*}

The sum of the right-hand side expression over $m$ and $j$ converges, leading to \eqref{sumovermj} and the proof of Theorem \ref{t1}.

\section{Sketch of the Proof of Theorem~\ref{quadraticmean}}

As Theorem \ref{quadraticmean} is proved in essentially the same way as the proof of the first part of \cite[Theorem 3]{Szab} given in Section 7.2 of \cite{Szab}, we shall only give a brief outline here. \newline

Let $\mathcal{C}$ be the set of characters being summed over in the outer sums of $S_{3,m}$ and $S_{4,m}$.  It suffices to prove
\begin{equation} \label{thm3bound}
 \sum_{\chi \in \mathcal{C}} \Big | \sum_{n}\chi(n)\Psi\big(\frac n{Y}\big)\Big |^{2m} \ll  XY^m(\log X)^{(m-1)^2} .
 \end{equation}
This can be proved in the same way, with some small modification, as \cite[Lemma 8]{Szab} which differs from the above only in that it is stated with a specific smooth $\Psi$ with compact support.  \newline

The crucial input in proving \eqref{thm3bound} is a bound for the following expression of the form
\[ \sum_{\chi \in \mathcal{C}} \Big( \int\limits_0^B \big| L (\tfrac12 + it , \chi ) \big| \dif t \Big)^{2k} . \]
Following the treatment in \cite[Proposition 3]{Szab}, in which the specific nature of the set over which $\chi$ is averaged plays no role, and using Corollary~\ref{cor1}, we get the desired result. \newline

\noindent{\bf Data availability statement.} Data sharing is not applicable to this article as no new data were created or analyzed in this study. \newline


\noindent{\bf Acknowledgments.}  P. G. is supported in part by NSFC grant 12471003 and L. Z. by the FRG grant PS71536 at the University of New South Wales.  The authors would like to thank the anonymous referee for his/her careful inspection of the paper and helpful suggestions.

\bibliography{biblio}
\bibliographystyle{amsxport}

\end{document}